\theoremstyle{definition}
\newtheorem{thm}{Theorem}[section]
\newtheorem{prop}[thm]{Proposition}
\newtheorem{cor}[thm]{Corollary}
\newtheorem{lem}[thm]{Lemma}
\newtheorem{ex}[thm]{Example}
\newtheorem*{ex*}{Example}
\newcommand{\pfend}{\hspace{3mm}$\Box$}
\date{}
\begin{document}

\title{On complex symmetric block Toeplitz operators}
\author{Dong-O Kang, Eungil Ko and Ji Eun Lee}

\maketitle \footnotetext{2010 Mathematics Subject Classification;
Primary 47B35, 47A05. %47B15, 47B20.
\par Keywords; Conjugation; Complex symmetric operator; Block Toeplitz operator.
\par This  third author was supported by Basic Science Research Program through the National Research Foundation of Korea(NRF) funded by the Ministry of Education(2016R1D1A1B03931937).  
  The last author was supported by Basic Science Research Program through the National Research Foundation of Korea(NRF) funded by the Ministry of Education, Science and Technology(2019R1A2C1002653).
}
\begin{abstract}

In this paper, we study complex symmetry of Toeplitz operators and  block  Toeplitz operators. In particular,  we give a characterization of complex symmetric block Toeplitz operators with  the special conjugation on the vector-valued Hardy space $H_{{\mathbb C}^2}^2$. As some applications, we provide examples of such operators.
\end{abstract}

% sec 1
\section{Introduction and  Preliminaries}

\par The study of complex symmetric operators is the interaction of the fields of operator theory and complex analysis. Recently, many authors have been interested in non-hermitian quantum mechanics
and in the spectral analysis of certain complex symmetric operators (see \cite{BFGJ} and \cite{GRP}). In particular, several authors have been studied an antilinear operator which is the only type of nonlinear operators that are  important in quantum mechanics (see \cite{Al}, \cite{So}, and \cite{We}).
A {\it conjugation} on $\cal H$ is an antilinear operator ${C}: {\cal H}\rightarrow {\cal H}$ with  ${C}^{2}=I$ which satisfies $\langle {C}x, {C}y \rangle=\langle y, x\rangle$ for all $x,y\in {\cal H}$. For the conjugation $C$, there  exists an orthonormal basis $\{e_n\}_{n=0}^{\infty}$ for ${\cal H}$ such that ${C}e_n=e_n$ for all $n$ (see \cite{Ga 3}).
We call an operator $T\in{\cal L(H)}$  {\it complex symmetric} if there exists a conjugation $ C$ on ${\cal H}$ such that $T= {C}T^{\ast}{C}$.
The class of complex symmetric operators includes all normal operators, Hankel operators, truncated Toeplitz operators,  Volterra integration operators, and etc.
We refer the reader to \cite{Ga 3}, \cite{GW}, \cite{JKL}, and \cite{KL} for more details, including historical comments and references.

Let $L^2$ be the Lebesgue (Hilbert) space on the unit circle $\partial{\Bbb D}$ and let $L^{\infty}$ be the Banach space of all essentially bounded functions on $\partial{\Bbb D}$. 
If $f\in L^{2},$ then the function $f$ is expressed as $f(z)=\sum_{n=-\infty}^{\infty}\hat{f}(n)z^{n}$ where $\hat{f}(n)$ denotes the $n$th Fourier coefficient of $f.$ The Hilbert Hardy space, denoted by $H^2$, consists of all functions $f$  analytic on the open unit disk $\Bbb D$ with the power series representation $$f(z)=\sum_{n=0}^{\infty}a_nz^n\ \mbox{with}\ \sum_{n=0}^{\infty}|a_n|^2<\infty.$$
For any $\varphi\in L^{\infty}$, the {\it Toeplitz operator} $T_{\varphi}:{H^2}\rightarrow{H^2}$ is defined by the formula
$$T_{\varphi}{f}=P(\varphi f)\ \mbox{for}\ f\in{H^2}$$ where $P$ denotes the orthogonal projection of $L^2$ onto $H^2$. 

Let $L_{{\Bbb C}^2}^2=L^2\otimes{\Bbb C}^2$, $H_{{\Bbb C}^2}^2=H^2\otimes{\Bbb C}^2$, and let $M_2$ be the set of $2 \times 2$ complex matrices.
For a matrix-valued function $\Phi\in L^{\infty}_{M_2}=L^{\infty}\otimes M_2$, the {\it block Toeplitz operator with symbol $\Phi$} is the operator $T_{\Phi}$
on the vector-valued Hardy space $H_{{\Bbb C}^2}^2$ of the unit disk defined by
$$T_{\Phi}h=P_2(\Phi h), \  h\in H_{{\Bbb C}^{2}}^2$$ where $P_2$ denotes the orthogonal projection of $L_{{\Bbb C}^2}^2$ onto $H_{{\Bbb C}^2}^2$ (see  \cite{HKL} and  \cite{Gu}). In particular,
 if  $\Phi=\begin{pmatrix} \varphi_{1} &\varphi_{2} \cr \varphi_{3} &\varphi_{4} \end{pmatrix}$ where $\varphi_{j}\in L^{\infty}$ for $j=1,2,3,4$,  then the block Toeplitz operator has the following representation; $$
T_{\Phi}=\begin{pmatrix}T_{\varphi_{1}} & T_{\varphi_{2}}\cr
T_{\varphi_{3}} & T_{\varphi_{4}}\end{pmatrix}.$$

The study of  Toeplitz operators and block Toeplitz operators provides deep and important connections with various problems in the field of physics (see \cite{BFGJ} and \cite{GRP}).
In particular, the spectral theory of Toeplitz operators is applied to the research in  quantum mechanics (see \cite{P}). In addition, the study of the theory of block Toeplitz determinants plays a
significant role in the study of high-temperature superconductivity (see \cite{BE}).
In 1960's, A. Brown and P. Halmos \cite{BH} proved that $T_{\varphi}$ is normal if and only if $\varphi=\alpha+\beta\rho$ where  $\rho$ is a real-valued function in $L^{\infty}$ and $\alpha, \beta\in{\Bbb C}$. In general, $T_{\varphi}$ may not be a complex symmetric operator.
In 2014, K. Guo and S. Zhu \cite{GZ} have raised the following interesting question; characterize a complex symmetric Toeplitz operator on $H^2$. Recently,  the authors in \cite{KL} gave  a characterization of a complex symmetric Toeplitz operator $T_{\varphi}$ on $H^2$ with some conjugations.
In view of this, we are interested in the following question;

$${\mbox{\it When is a block Toeplitz operator $T_{\Phi}$ complex symmetric?}}$$
These results give us some motivations to study complex symmetric block Toeplitz operators.

\medskip
In this paper, we study  complex symmetry of Toeplitz operators and block Toeplitz operators.
In particular, we provide a characterization of complex symmetric block Toeplitz operators with the  special conjugation. From this, we give examples of normal complex symmetric block Toeplitz operators.

\bigskip

\section{Remark on complex symmetry of a Toeplitz operator}
In this section, we give a characterization of complex symmetric Toeplitz operator with the special conjugation on the Hardy space $H^2$.
Recall that if $C_{\mu,\lambda}: H^2\rightarrow H^2$ is defined by $$C_{\mu,\lambda}f(z)=\mu\overline{f({\lambda}\overline{z})}$$ for all $\lambda,\mu\in \partial {\Bbb D}$, then $C_{\mu,\lambda}$ is the conjugation operator on $H^2$ from \cite{KL}.
Let  $P$ denote the orthogonal projection of $L^2$ onto $H^2$. 
Then the operator $C_{\mu,\lambda}$ can also be defined on $L^{2}.$ Note that
\begin{equation}
C_{\mu,\lambda}P=PC_{\mu,\lambda},\label{one}%
\end{equation}
since for $n\geq0$%
\[
PC_{\mu,\lambda}z^{n}=P\mu\overline{\lambda}^{n}z^{n}=\mu\overline{\lambda}^{n}%
z^{n}=C_{\mu,\lambda}z^{n}=C_{\mu,\lambda}Pz^{n}%
\]
and for $n<0,PC_{\mu,\lambda}\overline{z}^{-n}=0=C_{\mu,\lambda}P\overline
{z}^{-n}.$ We start with the following lemmas.

%2.1
\begin{lem}\label{lem_1} {\it
On $H^{2},$ for any operator $T,$ $C_{\mu,\lambda}TC_{\mu,\lambda
}=C_{1,\lambda}TC_{1,\lambda}$.  Hence, the operator $T$ is complex symmetric with the conjugation $C_{\mu
,\lambda}$ if and only if $T$ is complex symmetric with the conjugation
$C_{1,\lambda}.$}
\end{lem}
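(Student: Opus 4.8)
The plan is to exploit the fact that the parameter $\mu$ decouples from $\lambda$ and enters only as a unimodular scalar that will cancel against its own conjugate. First I would observe that, directly from the definition, $C_{\mu,\lambda}f = \mu\,\overline{f(\lambda\overline{z})} = \mu\,(C_{1,\lambda}f)$, so that as operators $C_{\mu,\lambda} = M_{\mu}\,C_{1,\lambda}$, where $M_{\mu}$ denotes multiplication by the constant $\mu$. Equivalently, on monomials one has $C_{\mu,\lambda}z^{n} = \mu\overline{\lambda}^{n}z^{n}$ (as already computed in the excerpt), which exhibits the same factorization at the level of basis vectors. This reduces the whole statement to bookkeeping of scalars past an antilinear map.

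The key computation is then to insert this factorization into the sandwich $C_{\mu,\lambda}TC_{\mu,\lambda}$ and track the scalars through the antilinearity of $C_{1,\lambda}$. Writing $A=C_{1,\lambda}$, I have $C_{\mu,\lambda}TC_{\mu,\lambda}=M_{\mu}\,A\,T\,M_{\mu}\,A$. Since $A$ is antilinear and $T$ is linear, moving $M_{\mu}$ to the left past $AT$ produces a conjugate, namely $A\,T\,M_{\mu}=M_{\overline{\mu}}\,A\,T$, because $A(T(\mu g))=A(\mu Tg)=\overline{\mu}\,A(Tg)$. Hence $C_{\mu,\lambda}TC_{\mu,\lambda}=M_{\mu\overline{\mu}}\,A\,T\,A=|\mu|^{2}\,C_{1,\lambda}TC_{1,\lambda}$. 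Because $\mu\in\partial\mathbb{D}$ we have $|\mu|^{2}=1$, which yields exactly $C_{\mu,\lambda}TC_{\mu,\lambda}=C_{1,\lambda}TC_{1,\lambda}$, the first assertion.

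The second assertion is then immediate. Applying the identity just obtained to $T^{\ast}$ in place of $T$ gives $C_{\mu,\lambda}T^{\ast}C_{\mu,\lambda}=C_{1,\lambda}T^{\ast}C_{1,\lambda}$, so the equation $T=C_{\mu,\lambda}T^{\ast}C_{\mu,\lambda}$ holds if and only if $T=C_{1,\lambda}T^{\ast}C_{1,\lambda}$. In other words, $T$ is complex symmetric with respect to $C_{\mu,\lambda}$ precisely when it is complex symmetric with respect to $C_{1,\lambda}$.

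The only delicate point — and the step I would double-check — is the appearance of the conjugate on $\mu$: the single conjugation arises because exactly one antilinear factor stands between the two scalar multiplications, and it is this conjugation, together with $|\mu|=1$, that collapses the $\mu$-dependence. Everything else is a direct substitution, so I do not anticipate any analytic difficulty; the entire content is the algebra of commuting a scalar past an antilinear operator.
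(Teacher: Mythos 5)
Your proof is correct and follows essentially the same route as the paper: both factor $C_{\mu,\lambda}$ as the scalar multiplication $\mu I$ composed with $C_{1,\lambda}$, commute the scalar past the linear operator $T$, pick up a conjugate when passing it through the antilinear $C_{1,\lambda}$, and conclude via $\mu\overline{\mu}=|\mu|^{2}=1$. The second assertion is handled identically in both arguments, by applying the operator identity to $T^{\ast}$.
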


\begin{proof}
Since $C_{\mu,\lambda}=\mu I\cdot C_{1,\lambda}=C_{1,\lambda}%
\cdot\overline{\mu}I,$ it follows that%
\begin{align*}
C_{\mu,\lambda}TC_{\mu,\lambda}  & =\mu I\cdot C_{1,\lambda}\cdot T\cdot\mu
I\cdot C_{1,\lambda}\\
& =\mu I\cdot C_{1,\lambda}\cdot\mu I\cdot T\cdot C_{1,\lambda}\\
& =C_{1,\lambda}\cdot\overline{\mu}I\cdot\mu I\cdot T\cdot C_{1,\lambda}\\
& =C_{1,\lambda}\cdot T\cdot C_{1,\lambda}.
\end{align*}
Therefore the operator $T$ is complex symmetric with the conjugation $C_{\mu
,\lambda}$ if and only if $T$ is complex symmetric with the conjugation
$C_{1,\lambda}.$
\end{proof}
%2.2
\begin{lem}\label{lem_2}
{\it On $H^{2},$ for any Toeplitz operator $T_{\varphi},$
\[
C_{1,\lambda}T_{\varphi(z)}C_{1,\lambda}=T_{C_{1,\lambda}\varphi
(z)}=T_{\overline{\varphi(\lambda\overline{z})}}.
\]}
\end{lem}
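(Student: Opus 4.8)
The plan is to verify the two equalities separately. The second one, $T_{C_{1,\lambda}\varphi(z)}=T_{\overline{\varphi(\lambda\overline{z})}}$, is immediate from the definition of the conjugation: applying $C_{1,\lambda}$ to the symbol $\varphi$ gives $C_{1,\lambda}\varphi(z)=\overline{\varphi(\lambda\overline{z})}$, so the two Toeplitz operators literally have the same symbol. Thus the real content is the first equality $C_{1,\lambda}T_{\varphi}C_{1,\lambda}=T_{C_{1,\lambda}\varphi}$.

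To establish this, I would act on an arbitrary $f\in H^{2}$ and chase the definitions directly from $T_{\varphi}f=P(\varphi f)$. Applying the rightmost $C_{1,\lambda}$ to $f$, then $T_{\varphi}$, then the leftmost $C_{1,\lambda}$, I obtain
$$C_{1,\lambda}T_{\varphi}C_{1,\lambda}f=C_{1,\lambda}P(\varphi\cdot C_{1,\lambda}f).$$
The first tool is the commutation relation $C_{1,\lambda}P=PC_{1,\lambda}$ recorded in (\ref{one}) (taking $\mu=1$), which moves the projection to the outside:
$$C_{1,\lambda}T_{\varphi}C_{1,\lambda}f=P\big(C_{1,\lambda}(\varphi\cdot C_{1,\lambda}f)\big).$$

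The key step is a multiplicativity property of $C_{1,\lambda}$ on $L^{2}$: for $\varphi\in L^{\infty}$ and $g\in L^{2}$,
$$C_{1,\lambda}(\varphi g)(z)=\overline{\varphi(\lambda\overline{z})\,g(\lambda\overline{z})}=\overline{\varphi(\lambda\overline{z})}\cdot\overline{g(\lambda\overline{z})}=(C_{1,\lambda}\varphi)(z)\cdot(C_{1,\lambda}g)(z),$$
since complex conjugation is multiplicative and the substitution $z\mapsto\lambda\overline{z}$ is applied identically to both factors. Taking $g=C_{1,\lambda}f$ and using the involution property $C_{1,\lambda}^{2}=I$ (part of $C_{1,\lambda}$ being a conjugation), I get $C_{1,\lambda}(\varphi\cdot C_{1,\lambda}f)=(C_{1,\lambda}\varphi)\cdot f$. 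Substituting back yields
$$C_{1,\lambda}T_{\varphi}C_{1,\lambda}f=P\big((C_{1,\lambda}\varphi)\cdot f\big)=T_{C_{1,\lambda}\varphi}f,$$
which is the desired identity.

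I do not expect a serious obstacle here; the computation is a short definition chase. The only point requiring care is the multiplicativity identity, and in particular ensuring that $C_{1,\lambda}$ is being used on all of $L^{2}$ rather than only on $H^{2}$, so that the product $\varphi\cdot C_{1,\lambda}f$, which need not lie in $H^{2}$ before projecting, is handled correctly. The commutation $C_{1,\lambda}P=PC_{1,\lambda}$ on $L^{2}$, already established in the text, is exactly what legitimizes interchanging $C_{1,\lambda}$ with the projection at that stage.
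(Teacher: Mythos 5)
Your proposal is correct and follows essentially the same route as the paper: both act on an arbitrary element of $H^{2}$, invoke the commutation $C_{1,\lambda}P=PC_{1,\lambda}$ from (\ref{one}) to move the conjugation past the projection, and then evaluate $C_{1,\lambda}$ on the product $\varphi\cdot C_{1,\lambda}f$. The only cosmetic difference is that you isolate the multiplicativity $C_{1,\lambda}(\varphi g)=(C_{1,\lambda}\varphi)(C_{1,\lambda}g)$ and the involution $C_{1,\lambda}^{2}=I$ as explicit steps, whereas the paper performs the same substitution $z\mapsto\lambda\overline{z}$ in a single concrete computation.
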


\begin{proof}
For any $h\in H^{2},$  we have%
\begin{align*}
C_{1,\lambda}T_{\varphi(z)}C_{1,\lambda}h(z)  & =C_{1,\lambda}T_{\varphi
(z)}\overline{h(\lambda\overline{z})}\\
& =C_{1,\lambda}P\left[  \varphi(z)\overline{h(\lambda\overline{z})}\right]
\\
& =PC_{1,\lambda}\left[  \varphi(z)\overline{h(\lambda\overline{z})}\right]
\\
& =P\left[  \overline{\varphi(\lambda\overline{z})}h(z)\right]  =T_{\overline
{\varphi(\lambda\overline{z})}}h(z),
\end{align*}
where the third equality follows from (\ref{one}).
\end{proof}

We prove one of our main results.
%2.3
\begin{thm}\label{thm1}
{\it $T_{\varphi}$ is complex symmetric with  the conjugation $C_{1,\lambda}$ if and
only if $$\varphi(z)=\varphi(\lambda\overline{z})\ \mbox{ on}\ \left\vert z \right\vert
=1.$$}
\end{thm}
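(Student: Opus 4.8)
The plan is to reduce the defining condition of complex symmetry, namely $T_{\varphi}=C_{1,\lambda}T_{\varphi}^{\ast}C_{1,\lambda}$, to an equality of Toeplitz symbols by combining Lemma \ref{lem_2} with the standard adjoint formula for Toeplitz operators. First I would recall that $T_{\varphi}^{\ast}=T_{\overline{\varphi}}$ on $H^{2}$, so that the complex symmetry condition with respect to $C_{1,\lambda}$ reads $T_{\varphi}=C_{1,\lambda}T_{\overline{\varphi}}C_{1,\lambda}$.

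Next I would apply Lemma \ref{lem_2} with the symbol $\overline{\varphi}$ in place of $\varphi$. Since that lemma asserts $C_{1,\lambda}T_{\psi(z)}C_{1,\lambda}=T_{\overline{\psi(\lambda\overline{z})}}$ for any symbol $\psi$, taking $\psi=\overline{\varphi}$ gives
\[
C_{1,\lambda}T_{\overline{\varphi}}C_{1,\lambda}=T_{\overline{\overline{\varphi}(\lambda\overline{z})}}=T_{\varphi(\lambda\overline{z})},
\]
where the two conjugations cancel. Hence the complex symmetry of $T_{\varphi}$ with respect to $C_{1,\lambda}$ is equivalent to the operator identity $T_{\varphi(z)}=T_{\varphi(\lambda\overline{z})}$.

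Finally I would invoke the injectivity of the symbol map for Toeplitz operators: if $T_{\psi}=T_{\eta}$ then $\psi=\eta$ almost everywhere on $\partial\mathbb{D}$. Applying this with $\psi(z)=\varphi(z)$ and $\eta(z)=\varphi(\lambda\overline{z})$ yields $\varphi(z)=\varphi(\lambda\overline{z})$ on $\lvert z\rvert=1$, and since every step in the chain is an equivalence, the argument is reversible, giving both directions of the theorem at once.

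The conceptual content is light once Lemma \ref{lem_2} is available, so I do not expect a serious obstacle; the two points requiring care are bookkeeping rather than ideas. The first is tracking the conjugation correctly when Lemma \ref{lem_2} is fed the symbol $\overline{\varphi}$: one must check that the resulting symbol $\overline{(\overline{\varphi})(\lambda\overline{z})}$ collapses to $\varphi(\lambda\overline{z})$ and not to $\overline{\varphi(\lambda\overline{z})}$. The second is making explicit the uniqueness of the Toeplitz symbol, since it is precisely this injectivity of $\varphi\mapsto T_{\varphi}$ that licenses the passage from the operator equality to the pointwise identity on the circle.
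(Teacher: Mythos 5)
Your proposal is correct and follows essentially the same route as the paper's own proof: rewrite the complex symmetry condition using $T_{\varphi}^{\ast}=T_{\overline{\varphi}}$, apply Lemma \ref{lem_2} to the symbol $\overline{\varphi}$ to obtain $C_{1,\lambda}T_{\overline{\varphi}}C_{1,\lambda}=T_{\varphi(\lambda\overline{z})}$, and conclude by the injectivity of the symbol map. The only difference is cosmetic: you spell out the symbol-uniqueness step that the paper leaves implicit, which is a reasonable bit of added care.
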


\begin{proof}
By definition, $T_{\varphi}$ is complex symmetric with the conjugation
$C_{1,\lambda}$ if and only if%
\[
T_{\varphi(z)}=C_{1,\lambda}T_{\varphi(z)}^{\ast}C_{1,\lambda}=C_{1,\lambda}T_{\overline{\varphi(z)}}C_{1,\lambda}.%
\]
By Lemma \ref{lem_2}, we have
\[
C_{1,\lambda}T_{\overline{\varphi(z)}}C_{1,\lambda}=T_{\varphi(\lambda
\overline{z})}.
\]
So we obtain $T_{\varphi(\lambda\overline{z})}=T_{\varphi(z)}$ and
$\varphi(z)=\varphi(\lambda\overline{z}).$
\end{proof}

\
\medskip

Let us recall that $\varphi(z)=\sum_{n=-\infty}^{\infty}\hat{\varphi}(n)e_n$ belong to $L^{\infty}$.
Set $$\varphi_{+}=S^{\ast}T_{\varphi}e_0=\sum_{n=0}^{\infty}\hat{\varphi}(n+1)e_n\ \mbox{and}\  \varphi_{-}=S^{\ast}T_{\varphi}^{\ast}e_0=\sum_{n=0}^{\infty}\overline{\hat{\varphi}(-n-1)}e_n$$
where  $S$ denotes the unilateral shift on $H^2$.
%2.4
\begin{cor}
{\it Let  $S$ denote the unilateral shift on $H^2$.
If $T_{\varphi}$ is a complex symmetric operator with the conjugation $C_{1,\lambda}$, then $\varphi_{-}(z)= \overline{\varphi_{+}(\lambda\overline{z})}$  with $|\lambda|=1$. Moreover,  $ \hat{\varphi}(-n-1)={\lambda}^{n+1}\hat{\varphi}(n+1)$ for all $n=0,1,2,\cdots$.}
\end{cor}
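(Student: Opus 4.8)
The plan is to reduce the whole statement to a comparison of Fourier coefficients, using Theorem \ref{thm1} as the single structural input. Since $T_{\varphi}$ is complex symmetric with the conjugation $C_{1,\lambda}$, Theorem \ref{thm1} gives the pointwise identity $\varphi(z)=\varphi(\lambda\overline{z})$ on $|z|=1$, and $|\lambda|=1$ because $C_{1,\lambda}$ is a conjugation (it is built from $C_{\mu,\lambda}$ with $|\lambda|=1$). Everything in the corollary should then fall out of this one identity by expanding in Fourier series, so the ``moreover'' part is really the core and the functional identity is a repackaging of it.

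First I would write $\varphi(z)=\sum_{n}\hat{\varphi}(n)z^{n}$ and substitute $\overline{z}=z^{-1}$, valid on $|z|=1$, to obtain $\varphi(\lambda\overline{z})=\sum_{n}\hat{\varphi}(n)\lambda^{n}z^{-n}$. Re-indexing by $m=-n$ shows that the coefficient of $z^{m}$ in $\varphi(\lambda\overline{z})$ equals $\lambda^{-m}\hat{\varphi}(-m)$. Matching this against the coefficient $\hat{\varphi}(m)$ of $z^{m}$ in $\varphi(z)$ yields $\hat{\varphi}(m)=\lambda^{-m}\hat{\varphi}(-m)=\overline{\lambda}^{\,m}\hat{\varphi}(-m)$ for every $m$, the last equality using $|\lambda|=1$. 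Specializing to $m=n+1$ and multiplying through by $\lambda^{n+1}$ (once more using $|\lambda|=1$) gives $\hat{\varphi}(-n-1)=\lambda^{n+1}\hat{\varphi}(n+1)$ for all $n\ge 0$, which is exactly the second assertion.

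For the functional identity I would work directly from the series $\varphi_{+}=\sum_{n\ge0}\hat{\varphi}(n+1)e_{n}$, noting that $\overline{\varphi_{+}(\lambda\overline{z})}=C_{1,\lambda}\varphi_{+}$. A term-by-term computation, conjugating the coefficients and the powers of $\lambda$ and using $\overline{\overline{z}^{\,n}}=z^{n}$, rewrites $C_{1,\lambda}\varphi_{+}$ as a power series in $z$ whose $n$-th coefficient is assembled from $\overline{\hat{\varphi}(n+1)}$ and an appropriate power of $\overline{\lambda}$. Comparing this with $\varphi_{-}=\sum_{n\ge0}\overline{\hat{\varphi}(-n-1)}e_{n}$ and substituting the coefficient relation from the previous step is what identifies the two series.

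The step I expect to demand the most care is precisely the bookkeeping of the powers of $\lambda$: one must track the exact exponent of $\lambda$ produced at each stage and invoke $|\lambda|=1$ at the right places, since it is this normalization that governs whether the exponents of $\lambda$ arising in $\varphi_{-}$ and in $C_{1,\lambda}\varphi_{+}$ line up. This is not a cosmetic hypothesis but the crux of the matching, and I would double-check the off-by-one in the $\lambda$-exponent (the difference between $\lambda^{n}$ and $\lambda^{n+1}$ coming from the shift by $S^{\ast}$) very carefully, since the whole identity hinges on getting that single power right.
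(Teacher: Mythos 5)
Your route coincides with the paper's: the only structural input in both is Theorem \ref{thm1}, and everything else is Fourier bookkeeping. The difference is purely one of order — you extract the coefficient identity $\hat{\varphi}(-n-1)=\lambda^{n+1}\hat{\varphi}(n+1)$ first and then try to reassemble the functional identity, whereas the paper writes the decomposition $\varphi(z)=\varphi_{+}(z)+\varphi_{0}+\overline{\varphi_{-}(z)}$, matches the analytic part of $\varphi(z)$ against the coanalytic part of $\varphi(\lambda\overline{z})$ to get $\varphi_{-}(z)=\overline{\varphi_{+}(\lambda\overline{z})}$ directly, and only then reads off coefficients. Your second paragraph (the ``moreover'' part) is carried out correctly and completely.

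However, the step you flagged as needing care is exactly where the proposal, taken literally, does not close — your worry about the off-by-one in the $\lambda$-exponent is justified, not cosmetic. With the paper's stated ($S^{\ast}$-shifted) definitions $\varphi_{+}=\sum_{n\ge0}\hat{\varphi}(n+1)e_{n}$ and $\varphi_{-}=\sum_{n\ge0}\overline{\hat{\varphi}(-n-1)}e_{n}$, one computes $C_{1,\lambda}\varphi_{+}(z)=\overline{\varphi_{+}(\lambda\overline{z})}=\sum_{n\ge0}\overline{\hat{\varphi}(n+1)}\,\overline{\lambda}^{\,n}z^{n}$, while substituting your coefficient relation into $\varphi_{-}$ gives $\varphi_{-}(z)=\sum_{n\ge0}\overline{\hat{\varphi}(n+1)}\,\overline{\lambda}^{\,n+1}z^{n}$; so what the substitution actually proves is $\varphi_{-}=\overline{\lambda}\,C_{1,\lambda}\varphi_{+}$, a stray factor $\overline{\lambda}$ away from the claim. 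The discrepancy is inherited from the paper's notation rather than from your logic: the two assertions of the corollary are mutually consistent only if $\varphi_{\pm}$ are read as the \emph{unshifted} analytic and coanalytic parts, $\varphi_{+}(z)=\sum_{n\ge1}\hat{\varphi}(n)z^{n}$ and $\varphi_{-}(z)=\sum_{n\ge1}\overline{\hat{\varphi}(-n)}z^{n}$, which is what the paper's own proof tacitly assumes when it writes $\varphi(z)=\varphi_{+}(z)+\varphi_{0}+\overline{\varphi_{-}(z)}$ (that identity is false for the shifted versions — its constant term would be $\hat{\varphi}(1)+\hat{\varphi}(0)+\hat{\varphi}(-1)$). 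Under that reading your argument closes in two lines: $\hat{\varphi}(-n)=\lambda^{n}\hat{\varphi}(n)$ conjugates to $\overline{\hat{\varphi}(-n)}=\overline{\lambda}^{\,n}\overline{\hat{\varphi}(n)}$, which is precisely the statement that the $n$-th coefficients of $\varphi_{-}(z)$ and of $\overline{\varphi_{+}(\lambda\overline{z})}$ agree. So to complete the proposal, fix this interpretation of $\varphi_{\pm}$ explicitly and then perform the comparison you sketched; as written, the crucial identification of the two series is asserted rather than verified, and with the paper's literal definitions it would fail.
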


{\it Proof.} By Theorem \ref{thm1}, if we write
$$\begin{cases}
\varphi(z)=\varphi_{+}(z)+\varphi_0+\overline{\varphi_{-}(z)}\ \mbox{and}\\
\varphi(\lambda\overline{z})=\varphi_{+}(\lambda\overline{z})+\varphi_0+\overline{\varphi_{-}(\lambda\overline{z}}),
\end{cases}
$$ then  $\varphi(z)=\varphi(\lambda\overline{z})$
if and only if $\varphi_{-}(z)=\overline{\varphi_{+}(\lambda\overline{z})}$, which in terms of Fourier coefficients is
 $\hat{\varphi}(-n-1)={\lambda}^{n+1}\hat{\varphi}(n+1)$
for all $n\geq 0$ and $\left\vert \lambda \right\vert
=1.$
\medskip

As a consequence of Theorem \ref{thm1}, we recapture the following corollary.
%2.5
\begin{cor}(\cite[Corollary 2.6]{KL})\label{inf-CSO-0}{\it
{\em (i)} $T_{\varphi}$ is complex symmetric with the conjugation $C_{1,1}$ if and only if $$\varphi(z)=\varphi_0+2\sum_{n=1}^{\infty}\hat{\varphi}(n)Re\{z^n \}.$$
{\em (ii)}  $T_{\varphi}$ is complex symmetric with  the conjugation $C_{1,-1}$ if and only if $$\varphi(z)=\varphi_0+2\sum_{k=1}^{\infty}\hat{\varphi}(2k)Re\{z^{2k} \}+2i\sum_{k=1}^{\infty}\hat{\varphi}(2k-1)Im\{z^{2k-1} \}.$$}
\end{cor}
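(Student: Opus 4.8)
The plan is to derive both statements directly from Theorem~\ref{thm1} by specializing to $\lambda=1$ and $\lambda=-1$ and then translating the pointwise identity $\varphi(z)=\varphi(\lambda\overline{z})$ into a relation among Fourier coefficients. First I would record the general coefficient identity: writing $\varphi(z)=\sum_{n=-\infty}^{\infty}\hat{\varphi}(n)z^{n}$ and using $\overline{z}=z^{-1}$ on $|z|=1$, one computes
\[
\varphi(\lambda\overline{z})=\sum_{n=-\infty}^{\infty}\hat{\varphi}(n)\lambda^{n}z^{-n}=\sum_{m=-\infty}^{\infty}\hat{\varphi}(-m)\lambda^{-m}z^{m},
\]
so that $\varphi(z)=\varphi(\lambda\overline{z})$ holds on the unit circle if and only if $\hat{\varphi}(-m)=\lambda^{m}\hat{\varphi}(m)$ for every $m$. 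This is exactly the coefficient relation appearing in the previous corollary, and it is the single computation on which both parts rest.

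For part (i) I would set $\lambda=1$, so that the condition becomes $\hat{\varphi}(-m)=\hat{\varphi}(m)$ for all $m\geq1$. Substituting back and pairing the $z^{m}$ and $z^{-m}$ terms gives $\varphi(z)=\hat{\varphi}(0)+\sum_{m=1}^{\infty}\hat{\varphi}(m)(z^{m}+z^{-m})$; since $z^{m}+z^{-m}=2\,Re\{z^{m}\}$ on $|z|=1$ and $\hat{\varphi}(0)=\varphi_{0}$, this is precisely the asserted form $\varphi(z)=\varphi_0+2\sum_{n=1}^{\infty}\hat{\varphi}(n)Re\{z^n\}$. The converse is immediate, since any function of this form satisfies $\varphi(z)=\varphi(\overline{z})$ and hence is complex symmetric with $C_{1,1}$ by Theorem~\ref{thm1}.

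For part (ii) I would set $\lambda=-1$, so the condition reads $\hat{\varphi}(-m)=(-1)^{m}\hat{\varphi}(m)$; separating the even and odd frequencies gives $\hat{\varphi}(-2k)=\hat{\varphi}(2k)$ and $\hat{\varphi}(-(2k-1))=-\hat{\varphi}(2k-1)$. Pairing the conjugate frequencies as before yields $\hat{\varphi}(2k)(z^{2k}+z^{-2k})=2\hat{\varphi}(2k)Re\{z^{2k}\}$ for the even part and $\hat{\varphi}(2k-1)(z^{2k-1}-z^{-(2k-1)})=2i\,\hat{\varphi}(2k-1)Im\{z^{2k-1}\}$ for the odd part, which assemble into the displayed formula.

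The argument involves no real obstacle beyond careful bookkeeping; the only point that requires attention is the odd-frequency pairing in part (ii), where one must track the factor $2i$ coming from $Im\{z^{2k-1}\}=(z^{2k-1}-z^{-(2k-1)})/(2i)$ so that the sign in $\hat{\varphi}(-(2k-1))=-\hat{\varphi}(2k-1)$ is correctly absorbed and the coefficient $2i\,\hat{\varphi}(2k-1)$ emerges.
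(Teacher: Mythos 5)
Your proof is correct and follows exactly the route the paper intends: the paper states this corollary as an immediate consequence of Theorem~\ref{thm1} (specializing $\lambda=1$ and $\lambda=-1$), and your Fourier-coefficient translation $\hat{\varphi}(-m)=\lambda^{m}\hat{\varphi}(m)$ together with the pairings $z^{m}+z^{-m}=2\,Re\{z^{m}\}$ and $z^{m}-z^{-m}=2i\,Im\{z^{m}\}$ is precisely the bookkeeping the paper leaves implicit (and which matches its preceding corollary on the coefficients). Nothing is missing; the sign handling in the odd-frequency case of part (ii) is done correctly.
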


Remark that  if $\varphi(z)=\phi(z)+\alpha+\phi(\overline{z})$ and $\psi(z)=\phi(z)+\beta+\phi(-\overline{z})$ for $\phi\in zH^{2}$ and $\alpha,\beta\in{\Bbb C}$, then $T_{\varphi}$ and $T_{\psi}$ are complex symmetric operators on $H^2$ (\cite{KL}).
\medskip

%2.6
\begin{prop}\label{CSO} {\it Let  $\{e_n\}$ be an orthonormal basis of $H^2$ and let $C$ be the conjugation on $H^2$ with $Ce_n=e_n$.
If   $T_{\varphi}$ is a complex symmetric operator with the conjugation $C$ and $CSC=S$ where  $S$ denotes the unilateral shift on $H^2$, then   $\varphi_{-}={\lambda}C\varphi_{+}$ and $Ce_0=\overline{\lambda}e_0$  for some $\lambda\in{\partial {\Bbb D}}.$}
\end{prop}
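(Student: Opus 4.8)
The plan is to reduce everything to the action of $C$ on the single vector $e_0$ that generates $\ker S^{\ast}=H^2\ominus SH^2$, since both $\varphi_{+}=S^{\ast}T_{\varphi}e_0$ and $\varphi_{-}=S^{\ast}T_{\varphi}^{\ast}e_0$ are manufactured from $e_0$. First I would recast the two hypotheses as intertwining relations. Rewriting the complex symmetry $T_{\varphi}=CT_{\varphi}^{\ast}C$ and using $C^2=I$ gives $CT_{\varphi}=T_{\varphi}^{\ast}C$; this is the identity that lets me trade $T_{\varphi}$ for $T_{\varphi}^{\ast}$ each time $C$ crosses it.

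Next I would upgrade the shift hypothesis. From $CSC=S$, multiplying on the right by $C$ yields $CS=SC$. Because $C$ is a conjugation it obeys $\langle Cx,y\rangle=\overline{\langle x,Cy\rangle}$, and combining this with $CS=SC$ I can push $C$ through the adjoint:
\[
\langle CS^{\ast}x,y\rangle=\overline{\langle S^{\ast}x,Cy\rangle}=\overline{\langle x,SCy\rangle}=\overline{\langle x,CSy\rangle}=\langle Cx,Sy\rangle=\langle S^{\ast}Cx,y\rangle,
\]
so that $CS^{\ast}=S^{\ast}C$ as well. This passage of the antilinear $C$ across an adjoint is the only delicate point; once it is in hand, the rest is substitution.

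With $CS^{\ast}=S^{\ast}C$ I would then pin down $Ce_0$. Since $C$ commutes with $S^{\ast}$, the one-dimensional space $\ker S^{\ast}$ generated by $e_0$ is $C$-invariant, so $Ce_0=\overline{\lambda}\,e_0$ for some scalar; the conjugation identity $\langle Ce_0,Ce_0\rangle=\langle e_0,e_0\rangle$ forces $|\lambda|=1$, which is the second assertion $Ce_0=\overline{\lambda}e_0$ with $\lambda\in\partial{\Bbb D}$.

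Finally I would assemble the first assertion by a direct chain starting from $\varphi_{+}$:
\[
C\varphi_{+}=CS^{\ast}T_{\varphi}e_0=S^{\ast}CT_{\varphi}e_0=S^{\ast}T_{\varphi}^{\ast}Ce_0=\overline{\lambda}\,S^{\ast}T_{\varphi}^{\ast}e_0=\overline{\lambda}\,\varphi_{-},
\]
applying successively $CS^{\ast}=S^{\ast}C$, then $CT_{\varphi}=T_{\varphi}^{\ast}C$, then $Ce_0=\overline{\lambda}e_0$ together with the linearity of $S^{\ast}T_{\varphi}^{\ast}$. Since $|\lambda|=1$ this rearranges to $\varphi_{-}=\lambda C\varphi_{+}$, completing the argument.
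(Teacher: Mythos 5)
Your proof is correct, but it takes a genuinely different route from the paper's. The paper starts from the rank-one commutator identities $T_\varphi S-ST_\varphi=e_0\otimes\varphi_-$ and $T_\varphi^{\ast}S-ST_\varphi^{\ast}=e_0\otimes\varphi_+$, substitutes $T_\varphi^{\ast}=CT_\varphi C$ into the second, conjugates it by $C$ (using $C(u\otimes v)C=Cu\otimes Cv$), and then, invoking $CSC=S$, matches the two rank-one operators via $e_0\otimes\varphi_-=Ce_0\otimes C\varphi_+$ to read off both conclusions simultaneously. You bypass the commutator identities entirely: from $CSC=S$ you derive $CS^{\ast}=S^{\ast}C$ (your inner-product computation pushing the antilinear $C$ across the adjoint is the key step, and it is done correctly), conclude that the one-dimensional space $\ker S^{\ast}=\mathbb{C}e_0$ is $C$-invariant, which yields $Ce_0=\overline{\lambda}e_0$ with $|\lambda|=1$, and then get $\varphi_-=\lambda C\varphi_+$ by a direct substitution chain through $\varphi_+=S^{\ast}T_\varphi e_0$. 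What the paper's approach buys is the structural Brown--Halmos-type fact that the commutation defect of a Toeplitz operator against the shift is rank one, an identity that is reusable in its own right. What your approach buys is robustness and a cleaner logical separation: you show that $Ce_0=\overline{\lambda}e_0$ follows from $CSC=S$ alone, with no reference to $\varphi$ at all, whereas the paper extracts it from an equality of rank-one operators --- an inference that degenerates when $\varphi_+=\varphi_-=0$ (e.g.\ $\varphi$ constant), a case your argument covers without any extra care.
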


{\it Proof.}  Suppose that $T_{\varphi}$ is a Toeplitz operator with a symbol $\varphi\in L^{\infty}$.
  If
$\varphi_{+}=S^{\ast}T_{\varphi}e_0\ \mbox{and}\ \varphi_{-}=S^{\ast}T_{\varphi}^{\ast}e_0$,
then we have \begin{equation}\label{E_2}
T_{\varphi}S-ST_{\varphi}=e_0\otimes \varphi_{-}\ \mbox{and}\ T_{\varphi}^{\ast}S-ST_{\varphi}^{\ast}=e_0\otimes \varphi_{+}.
\end{equation}
Since $T_{\varphi}^{\ast}=CT_{\varphi}C$, it follows from (\ref{E_2}) that
$$\begin{cases}
T_{\varphi}S-ST_{\varphi}=e_0\otimes \varphi_{-}\\
T_{\varphi}CSC-CSCT_{\varphi}=Ce_0\otimes C\varphi_{+}.
\end{cases}
$$
If $CSC=S$, then $e_0\otimes \varphi_{-}=Ce_0\otimes C\varphi_{+}$ implies that $$e_0=\lambda Ce_0\ \mbox{and}\ \varphi_{-}=\frac{1}{\overline{\lambda}}C\varphi_{+}$$ for some $\lambda\in{\Bbb C}.$
Hence $\varphi_{-}={\lambda}C\varphi_{+}$ and  $Ce_0=\overline{\lambda}e_0$  for some $\lambda\in{\partial {\Bbb D}}.$

\bigskip
%%%%%%%%%%%%%%%

\section{Complex symmetry of a  block Toeplitz operator}
In this section, we  study complex symmetric block Toeplitz operators.  We recently realized the  operator conjugations were rarely studied. So we consider these block Toeplitz operators with  special conjugation 
\begin{equation*}\label{con-B}
\frac{1}{\sqrt{2}}\left(
\begin{array}
[c]{cc}%
C_{\mu,\lambda} & C_{\mu,\lambda}\\
C_{\mu,\lambda} & -C_{\mu,\lambda}%
\end{array}
\right) 
\end{equation*}
on $H_{{\Bbb C}^2}^2$ from \cite{KL1}. From Lemma \ref{lem_1}, we may assume
$$
{\mathcal C}:=\frac{1}{\sqrt{2}}\left(
\begin{array}
[c]{cc}%
C_{1,\lambda} & C_{1,\lambda}\\
C_{1,\lambda} & -C_{1,\lambda}%
\end{array}
\right) 
$$ with $\mu=1.$

%3.1
\begin{thm}\label{main}
{\it $T_{\Phi}=\left(
\begin{array}
[c]{cc}%
T_{\varphi_{1}} & T_{\varphi_{2}}\\
T_{\varphi_{3}} & T_{\varphi_{4}}%
\end{array}
\right)  $ is complex symmetric with the conjugation ${\mathcal C}=\frac{1}{\sqrt{2}}\left(
\begin{array}
[c]{cc}%
C_{1,\lambda} & C_{1,\lambda}\\
C_{1,\lambda} & -C_{1,\lambda}%
\end{array}
\right) $
if and only if 
$$
\begin{cases}
\varphi_1(z)=\frac{1}{2}\sum_{j=1}^{4}\varphi_{j}(\lambda\overline{z})\cr
\varphi_2(z)=\frac{1}{2}\sum_{j=1}^{2}[\varphi_{j}(\lambda\overline{z})-\varphi_{j+2}(\lambda\overline{z})]\cr
\varphi_3(z)=\frac{1}{2}\sum_{j=1}^{4}(-1)^{j+1}\varphi_{j}(\lambda\overline{z})\cr
\varphi_4(z)=\frac{1}{2}\sum_{j=1}^{2}(-1)^{j+1}[\varphi_{j}(\lambda\overline{z})-\varphi_{j+1}(\lambda\overline{z})]
\end{cases}
$$
on $\left\vert z\right\vert
=1$ and $|\lambda|=1$.%
}
\end{thm}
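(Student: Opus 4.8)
The plan is to write the complex-symmetry relation $T_{\Phi}=\mathcal{C}T_{\Phi}^{\ast}\mathcal{C}$ out explicitly at the level of $2\times2$ operator matrices and then read off the four scalar conditions by comparing entries and invoking the injectivity of the symbol map $\varphi\mapsto T_{\varphi}$. First I would record that, since $T_{\varphi}^{\ast}=T_{\overline{\varphi}}$ and adjoints of operator matrices are taken by conjugate-transposition,
$$T_{\Phi}^{\ast}=\begin{pmatrix} T_{\overline{\varphi_{1}}} & T_{\overline{\varphi_{3}}}\\ T_{\overline{\varphi_{2}}} & T_{\overline{\varphi_{4}}}\end{pmatrix}.$$
Writing $C=C_{1,\lambda}$ for brevity, so that $\mathcal{C}=\tfrac{1}{\sqrt{2}}\begin{pmatrix} C & C\\ C & -C\end{pmatrix}$, the triple product $\mathcal{C}T_{\Phi}^{\ast}\mathcal{C}$ carries an overall factor $\tfrac12$, and each of its four entries is a $\pm$ combination of the four expressions $C T_{\overline{\varphi_{j}}} C$.

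The key simplification is Lemma \ref{lem_2}: applying it with $\varphi$ replaced by $\overline{\varphi_{j}}$ gives, for every $j$,
$$C T_{\overline{\varphi_{j}}} C=T_{\overline{(\overline{\varphi_{j}})(\lambda\overline{z})}}=T_{\varphi_{j}(\lambda\overline{z})}.$$
A point that needs care here is the antilinearity of $C$: because every scalar entry of $\mathcal{C}$ is real ($\pm\tfrac{1}{\sqrt{2}}$), the antilinearity does not conjugate the matrix coefficients, so the block bookkeeping is the same as in the linear case; what one must still respect is the order of composition and the noncommutativity of the operator entries. Carrying out the two block multiplications $\mathcal{C}\cdot T_{\Phi}^{\ast}\cdot\mathcal{C}$ and substituting the displayed identity into each of the sixteen resulting terms, I would find that $\mathcal{C}T_{\Phi}^{\ast}\mathcal{C}$ is itself a block Toeplitz operator whose $(i,k)$ entry is a Toeplitz operator whose symbol is a fixed $\pm\tfrac12$ combination of $\varphi_{1}(\lambda\overline{z}),\dots,\varphi_{4}(\lambda\overline{z})$.

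Setting $\mathcal{C}T_{\Phi}^{\ast}\mathcal{C}=T_{\Phi}$ then reduces to four operator equalities between Toeplitz operators, one per entry. Since the symbol map $\varphi\mapsto T_{\varphi}$ is injective on $L^{\infty}$ (Brown--Halmos), each such equality is equivalent to the corresponding equality of symbols on $|z|=1$, producing exactly the four stated relations for $\varphi_{1},\varphi_{2},\varphi_{3},\varphi_{4}$. Both implications follow at once, since every step --- the adjoint formula, Lemma \ref{lem_2}, and the injectivity of the symbol map --- is reversible. I expect the main difficulty to be purely organizational: keeping the signs straight across the four entries while tracking that the antilinear factors compose correctly. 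The $(2,2)$-entry, which picks up the off-diagonal sign of $\mathcal{C}$ both on the left and on the right, is the most error-prone, and I would double-check the whole system by the internal consistency test that, because $\mathcal{C}^{2}=I$, applying the resulting substitution $\varphi_{j}(z)\mapsto(\text{combination of }\varphi_{k}(\lambda\overline{z}))$ twice must return the original symbols.
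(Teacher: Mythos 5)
Your proposal is correct and is essentially the paper's own argument: the paper likewise expands the complex-symmetry identity in block form (comparing $\mathcal{C}T_{\Phi}^{\ast}$ with $T_{\Phi}\mathcal{C}$ rather than forming the triple product), reduces each entry to a symbol identity via Lemma \ref{lem_2} (together with Theorem \ref{thm1}) and the injectivity of the symbol map, and then solves the resulting linear system (\ref{E_03-1}) for $\varphi_1,\dots,\varphi_4$ --- your direct sixteen-term expansion of $\mathcal{C}T_{\Phi}^{\ast}\mathcal{C}$ simply merges those last two steps into one. One caveat worth recording: your $(2,2)$-entry computation yields $\varphi_4(z)=\tfrac12\left[\varphi_1(\lambda\overline{z})-\varphi_2(\lambda\overline{z})-\varphi_3(\lambda\overline{z})+\varphi_4(\lambda\overline{z})\right]$, which agrees with what the paper's system (\ref{E_03-1}) produces but not with the theorem's printed fourth line, where the index $\varphi_{j+1}$ is evidently a misprint for $\varphi_{j+2}$; so your claim of recovering the relations \emph{exactly as stated} holds only modulo that typo.
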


{\it Proof.}
Since $T_{\Phi}$ is complex symmetric with the conjugation ${\cal C}$, it follows that
\begin{equation}\label{E_01_0}
{\cal C}T_{\Phi}^{\ast}
=\frac{1}{\sqrt{2}}\begin{pmatrix}C_{1,\lambda}(T_{\varphi_{1}}^{\ast}+T_{\varphi_{2}}^{\ast}) & C_{1,\lambda}(T_{\varphi_{3}}^{\ast}+T_{\varphi_{4}}^{\ast}) \cr
C_{1,\lambda}(T_{\varphi_{1}}^{\ast}-T_{\varphi_{2}}^{\ast})  & C_{1,\lambda}(T_{\varphi_{3}}^{\ast}- T_{\varphi_{4}}^{\ast}) \end{pmatrix}
\end{equation}
and
\begin{equation}\label{E_02_0}
T_{\Phi}{\cal C}=\frac{1}{\sqrt{2}}\begin{pmatrix} (T_{\varphi_{1}}+T_{\varphi_{2}})C_{1,\lambda} &  (T_{\varphi_{1}}- T_{\varphi_{2}})C_{1,\lambda} \cr
(T_{\varphi_{3}}+T_{\varphi_{4}})C_{1,\lambda}  & (T_{\varphi_{3}}- T_{\varphi_{4}})C_{1,\lambda} \end{pmatrix}.
\end{equation}
 Then (\ref{E_01_0}) and (\ref{E_02_0}) imply that
\begin{equation}\label{E_03-0}
\begin{cases}
T_{\varphi_{1}+\varphi_{2}}=C_{1,\lambda}(T_{\varphi_{1}+\varphi_{2}}^{\ast})C_{1,\lambda}\cr
T_{\varphi_{1}-\varphi_{2}} = C_{1,\lambda}(T_{\varphi_{3}+\varphi_{4}}^{\ast})C_{1,\lambda}\cr
 T_{\varphi_{3}-\varphi_{4}} =C_{1,\lambda}(T_{\varphi_{3}-\varphi_{4}}^{\ast})C_{1,\lambda}\cr
 T_{\varphi_{3}+\varphi_{4}} =C_{1,\lambda}(T_{\varphi_{1}-\varphi_{2}}^{\ast})C_{1,\lambda}.
\end{cases}
\end{equation}
By Theorem \ref{thm1}, we obtain that
$$
\varphi_{1}(z)+\varphi_{2}(z)   =\varphi_{1}(\lambda\overline{z})+\varphi
_{2}(\lambda\overline{z})\ \mbox{and}\
\varphi_{3}(z)-\varphi_{4}(z)   =\varphi_{3}(\lambda\overline{z})-\varphi
_{4}(\lambda\overline{z}).$$
Applying the proof of Lemma \ref{lem_2}, we have
$$
\begin{cases}
C_{1,\lambda}T_{\overline{\varphi_3(z)+\varphi_4(z)}}C_{1,\lambda}=T_{\varphi_3(\lambda
\overline{z})}+T_{\varphi_4(\lambda\overline{z})}\cr
C_{1,\lambda}T_{\overline{\varphi_1(z)-\varphi_2(z)}}C_{1,\lambda}=T_{\varphi_1(\lambda
\overline{z})}-T_{\varphi_2(\lambda\overline{z})}.
\end{cases}
$$
From the second and fourth equations of (\ref{E_03-0}), we get that
$$
\begin{cases}
\varphi_{1}(z)-\varphi_{2}(z)  =\varphi_{3}(\lambda\overline{z})+\varphi
_{4}(\lambda\overline{z})\cr
\varphi_{3}(z)+\varphi_{4}(z)  =\varphi_{1}(\lambda\overline{z})-\varphi
_{2}(\lambda\overline{z}).
\end{cases}
$$
Therefore we have
\begin{equation}\label{E_03-1}
\begin{cases}
\varphi_{1}(z)+\varphi_{2}(z) =\varphi_{1}(\lambda\overline{z})+\varphi
_{2}(\lambda\overline{z}),\\
\varphi_{1}(z)-\varphi_{2}(z) =\varphi_{3}(\lambda\overline{z})+\varphi
_{4}(\lambda\overline{z}),\\
\varphi_{3}(z)-\varphi_{4}(z) =\varphi_{3}(\lambda\overline{z})-\varphi
_{4}(\lambda\overline{z}),\\
\varphi_{3}(z)+\varphi_{4}(z)  =\varphi_{1}(\lambda\overline{z})-\varphi
_{2}(\lambda\overline{z}).
\end{cases}
\end{equation}
Hence  from (\ref{E_03-1}), we get the followings;
\begin{equation}\label{E_03-2}
\begin{cases}
\varphi_1(z)=\frac{1}{2}\sum_{j=1}^{4}\varphi_{j}(\lambda\overline{z})\cr
\varphi_2(z)=\frac{1}{2}\sum_{j=1}^{2}[\varphi_{j}(\lambda\overline{z})-\varphi_{j+2}(\lambda\overline{z})]\cr
\varphi_3(z)=\frac{1}{2}\sum_{j=1}^{4}(-1)^{j+1}\varphi_{j}(\lambda\overline{z})\cr
\varphi_4(z)=\frac{1}{2}\sum_{j=1}^{2}(-1)^{j+1}[\varphi_{j}(\lambda\overline{z})-\varphi_{j+1}(\lambda\overline{z})]
\end{cases}
\end{equation}
 where $|\lambda|=1$.

Conversely, if (\ref{E_03-2}) holds, then $(\ref{E_03-1})$  clearly holds.
Thus we get (\ref{E_03-0}).  By Theorem \ref{thm1} and Lemma \ref{lem_2},  $T_{\Phi}$ is complex symmetric with the conjugation ${\cal C}$.
%The converse implication holds by a similar method.
\pfend

\

\medskip
{The following result is the special case for Theorem \ref{main} with the assumption that $\varphi_{j}(z)$ are trigonometric polynomials.}

%3.2
\begin{cor}\label{finite}{\it Let  ${\Phi}=\begin{pmatrix}{\varphi_{1}} & {\varphi_{2}}\cr
{\varphi_{3}} & {\varphi_{4}}\end{pmatrix}$ where $\varphi_{j}(z)=\sum_{n=-m}^{N}\hat{\varphi}_j(n)z^n$ for $N\geq m>0$ and $j=1,2,3,4$.
Then $T_{\Phi}$ is complex symmetric with the conjugation ${\mathcal C}=\frac{1}{\sqrt{2}}\left(
\begin{array}
[c]{cc}%
C_{1,\lambda} & C_{1,\lambda}\\
C_{1,\lambda} & -C_{1,\lambda}%
\end{array}
\right)$ if and only if
 $N=m$ and
$$
\begin{cases}
\widehat{\varphi_1}(-n)=\frac{1}{2}\sum_{j=1}^{4}\widehat{\varphi_{j}}(n)\lambda^n\cr
\widehat{\varphi_2}(-n)=\frac{1}{2}\sum_{j=1}^{2}[\widehat{\varphi_{j}}(n)-\widehat{\varphi_{j+2}}(n)]\lambda^n\cr
\widehat{\varphi_3}(-n)=\frac{1}{2}\sum_{j=1}^{4}(-1)^{j+1}\widehat{\varphi_{j}}(n)\lambda^n\cr
\widehat{\varphi_4}(-n)=\frac{1}{2}\sum_{j=1}^{2}(-1)^{j+1}[\widehat{\varphi_{j}}(n)-\widehat{\varphi_{j+1}}(n)]\lambda^n
\end{cases}
$$ for all $n=1,2,3,\cdots,N$ and $|\lambda|=1$. }
\end{cor}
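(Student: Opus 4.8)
The plan is to specialize the functional equations from Theorem~\ref{main} (namely the system~(\ref{E_03-2}), equivalently~(\ref{E_03-1})) to the case where each $\varphi_j$ is the trigonometric polynomial $\varphi_j(z)=\sum_{n=-m}^{N}\widehat{\varphi_j}(n)z^n$, and then read off what the identity $\varphi_j(z)=(\text{combination of }\varphi_k(\lambda\overline z))$ forces on the Fourier coefficients. The central observation is that on $|z|=1$ we have $\overline z=z^{-1}$, so substituting $\lambda\overline z$ for $z$ in $\varphi_k$ gives $\varphi_k(\lambda\overline z)=\sum_{n=-m}^{N}\widehat{\varphi_k}(n)\lambda^n z^{-n}$. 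First I would rewrite the right-hand sides of~(\ref{E_03-2}) using this substitution, so that each right-hand side becomes a Laurent polynomial in $z$ whose $z^{-n}$-coefficient is the corresponding $\lambda^n$-weighted combination of the $\widehat{\varphi_k}(n)$ appearing in the four displayed formulas.

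The key step is then to equate coefficients of like powers of $z$ on both sides of each equation in~(\ref{E_03-2}), using the fact that on $|z|=1$ the functions $\{z^k\}_{k\in\mathbb Z}$ are linearly independent (they form an orthonormal family in $L^2(\partial\mathbb D)$), so two trigonometric polynomials agree on the circle exactly when all their Fourier coefficients agree. On the left-hand side, $\varphi_j(z)$ has support in degrees $-m$ through $N$; on the right-hand side the Laurent polynomial $\sum_k c_k \varphi_k(\lambda\overline z)$ has support in degrees $-N$ through $m$ (since $z^n$ for $-m\le n\le N$ maps to $z^{-n}$ for $-N\le -n\le m$). Matching the coefficient of $z^{-n}$ on the left (which is $\widehat{\varphi_j}(-n)$) against the coefficient of $z^{-n}$ on the right (which is the $\lambda^n$-weighted sum of $\widehat{\varphi_k}(n)$) yields precisely the four displayed relations for each $n$ with $1\le n\le N$. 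The same coefficient comparison for the positive index $n$ (equivalently, comparing the coefficient of $z^{n}$) gives no new information, since those equations are the complex-conjugate/reflected versions already encoded in the system, and the $n=0$ constant terms are automatically consistent.

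I expect the main obstacle to be pinning down the constraint $N=m$ and verifying the support/range bookkeeping carefully. The point is that the left-hand side of each equation has nonzero coefficients possibly up to degree $N$ (positive) and down to degree $-m$ (negative), while the reflected right-hand side has its positive-degree support capped at $m$ and its negative-degree support reaching $-N$. For the two Laurent polynomials to coincide, their supports must match: the left side can have a nonzero coefficient at degree $N$ only if the right side does, which requires $N\le m$; symmetrically, the right side can reach degree $-N$ only if the left side does, forcing $m\le N$. Combining these gives $N=m$, so the symbols are genuinely ``balanced'' trigonometric polynomials. The argument should handle the case where some top or bottom coefficients vanish, but the generic requirement that the displayed degree range be tight forces $N=m$; I would state this as the condition under which a nondegenerate system arises.

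After establishing $N=m$ and the coefficient identities, the converse is immediate: given coefficients satisfying the four displayed relations for $1\le n\le N$, one reconstructs the $\varphi_j$ as trigonometric polynomials of degree $N=m$ satisfying~(\ref{E_03-2}), whence Theorem~\ref{main} guarantees that $T_\Phi$ is complex symmetric with the conjugation $\mathcal C$. No genuinely new machinery is needed beyond Theorem~\ref{main} and the uniqueness of Fourier coefficients; the content of the corollary is the translation of the functional identities into finitely many scalar equations together with the degree-matching constraint $N=m$.
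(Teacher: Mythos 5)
Your strategy coincides with the paper's: both specialize the functional system (\ref{E_03-1})--(\ref{E_03-2}) of Theorem~\ref{main} to Laurent polynomials, write $\varphi_k(\lambda\overline z)=\sum_{n}\widehat{\varphi_k}(n)\lambda^n z^{-n}$ on $|z|=1$, and match Fourier coefficients; the paper's one-line proof is exactly this computation. Your observation that the positive-index comparisons add nothing is also correct, and here is the clean reason: the $z^{-n}$ comparison reads $u_{-n}=\lambda^{n}Au_{n}$, where $u_{n}=(\widehat{\varphi_1}(n),\widehat{\varphi_2}(n),\widehat{\varphi_3}(n),\widehat{\varphi_4}(n))^{T}$ and $A=\frac12\left(\begin{smallmatrix}1&1&1&1\\1&1&-1&-1\\1&-1&1&-1\\1&-1&-1&1\end{smallmatrix}\right)$ satisfies $A^{2}=I$, so the $z^{+n}$ comparison $\lambda^{n}u_{n}=Au_{-n}$ is automatic. (Carrying this out also reveals that the fourth displayed line of the corollary is an index typo: solving (\ref{E_03-1}) gives $\widehat{\varphi_4}(-n)=\frac12[\widehat{\varphi_1}(n)-\widehat{\varphi_2}(n)-\widehat{\varphi_3}(n)+\widehat{\varphi_4}(n)]\lambda^{n}$, i.e.\ the fourth row of $A$.)

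The genuine gap is your assertion that ``the $n=0$ constant terms are automatically consistent.'' They are not. At $n=0$ the two diagonal equations of (\ref{E_03-1}) are indeed trivial, but the two cross equations $\varphi_1(z)-\varphi_2(z)=\varphi_3(\lambda\overline z)+\varphi_4(\lambda\overline z)$ and $\varphi_3(z)+\varphi_4(z)=\varphi_1(\lambda\overline z)-\varphi_2(\lambda\overline z)$ impose the nontrivial constraint $\widehat{\varphi_1}(0)-\widehat{\varphi_2}(0)=\widehat{\varphi_3}(0)+\widehat{\varphi_4}(0)$, which the displayed conditions for $n\ge1$ do not imply. Consequently the converse direction of your argument fails: take $\lambda=1$, $N=m=1$, $\widehat{\varphi_1}(0)=1$ and all other coefficients zero; every displayed condition holds vacuously, yet $T_\Phi=\left(\begin{smallmatrix}I&0\\0&0\end{smallmatrix}\right)$ while $\mathcal C T_\Phi^{\ast}\mathcal C=\frac12\left(\begin{smallmatrix}I&I\\I&I\end{smallmatrix}\right)\neq T_\Phi$. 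The repair is to add the $n=0$ condition $\widehat{\varphi_1}(0)=\widehat{\varphi_2}(0)+\widehat{\varphi_3}(0)+\widehat{\varphi_4}(0)$ to the list of relations (compare the hypothesis $\varphi_1=\varphi_2+\varphi_3+\varphi_4$ in Theorem~\ref{inf-CSO-00}, which plays exactly this role there). To be fair, the paper's statement and proof share this defect --- the $n=0$ case of (\ref{Eq-ex}) is silently discarded when passing to the final display --- so you have faithfully reproduced the paper's argument; but your write-up makes the false step explicit, and it is precisely where the proof breaks. A smaller caveat, which you yourself flag: $N=m$ is forced only under the tightness assumption that some $\widehat{\varphi_j}(N)\neq0$; without it, the correct conclusion is merely that all coefficients of index $|n|>m$ vanish.
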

\smallskip

{\it Proof.} Applying the proof of Theorem \ref{main}, we get that 
 $T_{\Phi}$ is complex symmetric with the conjugation ${\mathcal C}$ if and only if
 $N=m$ and
\begin{equation}\label{Eq-ex}
\begin{cases}
\widehat{(\varphi_{1}+\varphi_2)}(-n)=\widehat{(\varphi_1+{\varphi}_2)}(n){\lambda}^n\cr
\widehat{(\varphi_{1}-\varphi_2)}(-n)=\widehat{(\varphi_3+{\varphi}_4)}(n){\lambda}^n\cr
\widehat{(\varphi_{3}-\varphi_4)}(-n)=\widehat{(\varphi_3-{\varphi}_4)}(n){\lambda}^n\cr
\widehat{(\varphi_{3}+\varphi_4)}(-n)=\widehat{(\varphi_1-{\varphi}_2)}(n){\lambda}^n,
\end{cases}
\end{equation}
equivalently, 
 $N=m$ and
$$
\begin{cases}
\widehat{\varphi_1}(-n)=\frac{1}{2}\sum_{j=1}^{4}\widehat{\varphi_{j}}(n)\lambda^n\cr
\widehat{\varphi_2}(-n)=\frac{1}{2}\sum_{j=1}^{2}[\widehat{\varphi_{j}}(n)-\widehat{\varphi_{j+2}}(n)]\lambda^n\cr
\widehat{\varphi_3}(-n)=\frac{1}{2}\sum_{j=1}^{4}(-1)^{j+1}\widehat{\varphi_{j}}(n)\lambda^n\cr
\widehat{\varphi_4}(-n)=\frac{1}{2}\sum_{j=1}^{2}(-1)^{j+1}[\widehat{\varphi_{j}}(n)-\widehat{\varphi_{j+1}}(n)]\lambda^n
\end{cases}
$$
 where $|\lambda|=1$.
 \pfend

\medskip

\

We illustrate the following example as some applications of Corollary \ref{finite}.
%3.3
\begin{ex}
Let $${\Phi}=\begin{pmatrix}-1
& -1\cr
-1 & 1\end{pmatrix}\overline{z}+
\begin{pmatrix}1
 & 1\cr
1 & -1\end{pmatrix}z$$
and $${\Psi}=\begin{pmatrix}1
 & -1\cr
-1 & 1\end{pmatrix}\overline{z}+
\begin{pmatrix}1
 & 1\cr
\frac{1}{2} & \frac{3}{2}\end{pmatrix}z.$$
Then (\ref{Eq-ex}) holds  for $T_{\Phi}$. Hence $T_{\Phi}$  is complex symmetric with the conjugation ${\mathcal C}$ (with $\lambda=-1$)  from Corollary \ref{finite}.
However, (\ref{Eq-ex}) does not hold  for $T_{\Psi}$. Hence $T_{\Psi}$  is not complex symmetric with the conjugation ${\mathcal C}$ (with $\lambda=1$) from Corollary \ref{finite}.
\end{ex}

\medskip

Let us recall that $\Phi(z)=\sum_{n=-\infty}^{\infty}\widehat{\Phi}(n)z^n$ where $$\widehat{\Phi}(n)=\begin{pmatrix}\hat{\varphi_{1}}(n) & \hat{\varphi_{2}}(n)\cr
\hat{\varphi_{3}}(n) &\hat{\varphi_{4}}(n)\end{pmatrix}\ \mbox{for all}\ n\in{\Bbb Z}.$$  Put  $\Phi_{+}(z)=\sum_{n=1}^{\infty}\widehat{\Phi}(n)z^n$,  $\Phi_{-}(z)=\sum_{n=1}^{\infty}\overline{\widehat{\Phi}(-n)}z^n,$ and $\Phi_0=\begin{pmatrix}\hat{\varphi_{1}}(0) & \hat{\varphi_{2}}(0)\cr
\hat{\varphi_{3}}(0) &\hat{\varphi_{4}}(0)\end{pmatrix}$.
Then $\Phi_{+}(z),\Phi_{-}(z) \in H^{\infty}_{M_2}$ and $$\Phi_{-}^{\ast}(z)=(\sum_{n=1}^{\infty}\overline{\widehat{\Phi}(-n)}z^n)^{\ast}=\sum_{n=1}^{\infty}{\widehat{\Phi}(-n)}\overline{z}^n=\sum_{n=-\infty}^{-1}{\widehat{\Phi}(n)}{z}^n$$ where $\ast$ denotes the complex conjugate.
Hence $\Phi(z)=\Phi_{+}(z)+\Phi_0+\Phi_{-}^{\ast}(z)\in L^{\infty}_{M_2}$ where $\Phi_0$ is a constant matrix.
Using Theorem \ref{main},   we give a neccessary and sufficient condition of complex symmetric block Toeplitz operators with the special conjugation.

%3.4
\begin{thm}\label{inf-CSO-00} {\it 
 If  $T_{\varphi_j}  (j=1,2,3,4)$ are  complex symmetric with the conjugation  $C_{1,\lambda}$ where ${\varphi_1}={\varphi_2+\varphi_3+\varphi_4}$, then the following statements are equivalent:\\
{\em (i)} $T_{\Phi}$ is complex symmetric with  the conjugation ${\mathcal C}=
\frac{1}{\sqrt{2}}\left(
\begin{array}
[c]{cc}%
C_{1,\lambda} & C_{1,\lambda}\\
C_{1,\lambda} & -C_{1,\lambda}%
\end{array}
\right) $.\\
{\em (ii)} $\widehat{\Phi}(n)\lambda^n=\widehat{\Phi}(-n)$ for all $n\in{\Bbb Z}$ and $|\lambda|=1$.\\
{\em (iii)}  $\Phi(z)=\Phi_0+\sum_{n=1}^{\infty}\widehat{\Phi}(n)(z^n+\lambda^n\overline{z}^{n})$  for $|\lambda|=1$.\\
{\em (iv)} $\Phi(z)=\Phi_{+}(z)+\Phi_0+{\Phi_{+}(\lambda\overline{z})}$ for $\Phi_{+}\in zH^2_{{\Bbb C}^2}$ and $|\lambda|=1$. 
}
\end{thm}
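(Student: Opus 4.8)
The plan is to prove the chain of equivalences by separating the purely formal reformulations from the one substantive step. First I would dispose of (ii)$\,\Leftrightarrow\,$(iii)$\,\Leftrightarrow\,$(iv), which need no hypothesis and are merely three ways of recording the single Fourier relation $\widehat{\Phi}(-n)=\lambda^{n}\widehat{\Phi}(n)$. Writing $\Phi(z)=\Phi_{0}+\sum_{n\geq 1}\widehat{\Phi}(n)z^{n}+\sum_{n\geq 1}\widehat{\Phi}(-n)z^{-n}$ and using $\overline{z}^{\,n}=z^{-n}$ on $|z|=1$, the series in (iii) equals $\Phi_{0}+\sum_{n\geq 1}\widehat{\Phi}(n)z^{n}+\sum_{n\geq 1}\lambda^{n}\widehat{\Phi}(n)z^{-n}$; matching the coefficient of $z^{-n}$ yields exactly $\widehat{\Phi}(-n)=\lambda^{n}\widehat{\Phi}(n)$, so (ii)$\,\Leftrightarrow\,$(iii). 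Since $\Phi_{+}(\lambda\overline{z})=\sum_{n\geq 1}\lambda^{n}\widehat{\Phi}(n)\overline{z}^{\,n}$, the right-hand side of (iv) is literally the right-hand side of (iii), giving (iii)$\,\Leftrightarrow\,$(iv) at once.

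The heart of the argument is (i)$\,\Leftrightarrow\,$(ii), where I would invoke Theorem \ref{main}. By that theorem, (i) holds precisely when the four symbol identities (\ref{E_03-1}) hold. I would then feed in the structural hypothesis $\varphi_{1}=\varphi_{2}+\varphi_{3}+\varphi_{4}$, that is $\varphi_{1}-\varphi_{2}=\varphi_{3}+\varphi_{4}$. Substituting this into (\ref{E_03-1}), the second and fourth identities both collapse to the single statement $\varphi_{3}+\varphi_{4}=(\varphi_{3}+\varphi_{4})(\lambda\overline{z})$, i.e. that $T_{\varphi_{3}+\varphi_{4}}$ is complex symmetric with $C_{1,\lambda}$ (Theorem \ref{thm1}), while the first and third identities say exactly that $T_{\varphi_{1}+\varphi_{2}}$ and $T_{\varphi_{3}-\varphi_{4}}$ are complex symmetric with $C_{1,\lambda}$. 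Thus, under the hypothesis, (i) is equivalent to the assertion that the three symbols $\varphi_{1}+\varphi_{2}$, $\varphi_{3}+\varphi_{4}$, $\varphi_{3}-\varphi_{4}$ each satisfy $\psi(z)=\psi(\lambda\overline{z})$.

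To close the loop I would use that the relation $\psi(z)=\psi(\lambda\overline{z})$ (equivalently $\widehat{\psi}(-n)=\lambda^{n}\widehat{\psi}(n)$) is complex-linear in $\psi$, so the $C_{1,\lambda}$-symmetric symbols form a linear subspace. From $\varphi_{3}+\varphi_{4}$ and $\varphi_{3}-\varphi_{4}$ lying in this subspace I recover $\varphi_{3}$ and $\varphi_{4}$; from $\varphi_{1}+\varphi_{2}$ and $\varphi_{1}-\varphi_{2}=\varphi_{3}+\varphi_{4}$ lying in it I recover $\varphi_{1}$ and $\varphi_{2}$. Hence every $T_{\varphi_{j}}$ is complex symmetric with $C_{1,\lambda}$, and writing this out in Fourier coefficients (the corollary to Theorem \ref{thm1}) gives $\widehat{\Phi}(-n)=\lambda^{n}\widehat{\Phi}(n)$, which is (ii). The converse (ii)$\,\Rightarrow\,$(i) is immediate: if each $\varphi_{j}$ is $C_{1,\lambda}$-symmetric, so are $\varphi_{1}+\varphi_{2}$, $\varphi_{3}+\varphi_{4}$, $\varphi_{3}-\varphi_{4}$, and together with the hypothesis the identities (\ref{E_03-1}) hold, so (i) follows from Theorem \ref{main}.

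I expect the main obstacle to be the bookkeeping in (i)$\,\Leftrightarrow\,$(ii): one must correctly collapse the four conditions of Theorem \ref{main} using the single relation $\varphi_{1}=\varphi_{2}+\varphi_{3}+\varphi_{4}$, and then exploit the linearity of the symmetry condition to pass between symmetry of the sums and differences and symmetry of the individual symbols. By contrast, (ii)$\,\Leftrightarrow\,$(iii)$\,\Leftrightarrow\,$(iv) are routine once everything is expressed in Fourier coefficients and $\overline{z}^{\,n}=z^{-n}$ on the unit circle is applied.
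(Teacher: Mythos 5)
Your proposal is correct, and at the top level it follows the same strategy as the paper (reduce (i) to symbol identities via Theorem \ref{main}, then pass to Fourier coefficients), but your execution of the decisive step (i)$\Leftrightarrow$(ii) is genuinely different from, and more careful than, the paper's. The paper's proof opens by asserting that, by Theorem \ref{main} and Lemma \ref{lem_1}, $T_{\Phi}$ is complex symmetric with ${\mathcal C}$ if and only if each $T_{\varphi_j}$ is complex symmetric with $C_{1,\lambda}$, and everything else hangs on that sentence. As a free-standing claim this is false: the system (\ref{E_03-1}) has solutions (for instance $\varphi_1=\overline{z}=-\varphi_2$, $\varphi_3=\varphi_4=z$, $\lambda=1$) in which no individual $\varphi_j$ satisfies $\varphi_j(z)=\varphi_j(\lambda\overline{z})$; it becomes true only in the presence of the relation $\varphi_1=\varphi_2+\varphi_3+\varphi_4$, and the paper never indicates how that relation enters --- in effect it leans on the standing hypothesis that the $T_{\varphi_j}$ are already $C_{1,\lambda}$-symmetric, under which all four statements (i)--(iv) hold outright and the equivalences are vacuous. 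Your argument supplies exactly the missing reasoning: substituting $\varphi_1-\varphi_2=\varphi_3+\varphi_4$ into (\ref{E_03-1}) collapses the second and fourth identities to the single statement that $\varphi_3+\varphi_4$ is $C_{1,\lambda}$-symmetric, and the complex-linearity of the condition $\psi(z)=\psi(\lambda\overline{z})$ then recovers the symmetry of each individual $\varphi_j$ from that of $\varphi_1+\varphi_2$, $\varphi_3+\varphi_4$, $\varphi_3-\varphi_4$. This buys you a slightly stronger and non-vacuous theorem: only the linear relation among the symbols is needed, not the a priori symmetry of each $T_{\varphi_j}$. Your handling of (ii)$\Leftrightarrow$(iii)$\Leftrightarrow$(iv) by direct coefficient matching is the same computation the paper performs; you merely organize it around (ii), whereas the paper routes (iii) and (iv) back through (i).
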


{\it Proof.} By Theorem \ref{main} and Lemma \ref{lem_1}, $T_{\Phi}$ is complex symmetric with the conjugation ${\cal C}$ if and only if $T_{\varphi_j}(j=1,2,3,4)$ are complex symmetric with the conjugation $C_{1,\lambda}$.
 
 (i)$\Leftrightarrow$(ii):  
Since  $T_{\varphi_j}(j=1,2,3,4)$ are complex symmetric with  the conjugation $C_{1,\lambda}$ if and only if
 $\hat{\varphi_j}(-n)=\lambda\hat{\varphi_j}(n)$ for all $n\in{\Bbb Z}$ and for $j=1,2,3,4$  by  \cite{KL}, we conclude that
$$\widehat{\Phi}(n)\lambda^n=\widehat{\Phi}(-n)$$ for all $n\in{\Bbb Z}$  and $|\lambda|=1$.

 (i) $\Leftrightarrow$ (iii):
Let  $T_{\Phi}$ be complex symmetric with the conjugation ${\cal C}$. Then, from  assertion (ii),
\begin{eqnarray}\label{cso-cor}
\Phi(z)&=&\sum_{n=1}^{\infty}\widehat{\Phi}(n)z^n+\Phi_0+\sum_{n=1}^{\infty}\widehat{\Phi}(-n)\overline{z}^n\cr
&=&\sum_{n=1}^{\infty}\widehat{\Phi}(n)z^n+\Phi_0+\sum_{n=1}^{\infty}\widehat{\Phi}(n)\lambda^n\overline{z}^n\cr
&=&\Phi_0+\sum_{n=1}^{\infty}\widehat{\Phi}(n)(z^n+\lambda^n\overline{z}^n),
\end{eqnarray} and therefore we have the statement (iii).

Conversely, if $\Phi(z)=\Phi_0+\sum_{n=1}^{\infty}\widehat{\Phi}(n)(z^n+\lambda^n\overline{z}^n)$ with $|\lambda|=1$,
then (\ref{cso-cor}) and \cite{KL} implies that
\begin{eqnarray*}
\widehat{\Phi}(-n)&=&\begin{pmatrix}\langle \varphi_1, e_{-n}\rangle & \langle \varphi_2, e_{-n}\rangle\cr
\langle \varphi_3, e_{-n}\rangle & \langle \varphi_4, e_{-n}\rangle\end{pmatrix}\cr
&=&
\langle \sum_{n=1}^{\infty}\widehat{\Phi}(n)z^n+\Phi_0+\sum_{n=1}^{\infty}\widehat{\Phi}(n)\lambda^n\overline{z}^n, e_{-n}I_{M_2}\rangle\cr
&=&\lambda^n\widehat{\Phi}(n)
\end{eqnarray*} for all $n\in{\Bbb Z}$. By (i) $\Leftrightarrow$  (ii),  $T_{\Phi}$ is complex symmetric with  the conjugation ${\cal C}$.

(i) $\Leftrightarrow$ (iv): Since $T_{\Phi}$ is complex symmetric with the conjugation ${\cal C}$, it follows from (ii) that
 $$\Phi_{-}(z)=\sum_{n=1}^{\infty}\overline{\widehat{\Phi}(n)}\overline{\lambda}^n{z}^n=\overline{\sum_{n=1}^{\infty}\widehat{\Phi}(n)\lambda^n\overline{z}^n} ={\Phi_{+}^{\ast}(\lambda\overline{z})}$$   where $\ast$ denotes the complex conjugate.
Therefore, $\Phi(z)=\Phi_{+}(z)+\Phi_0+{\Phi_{+}(\lambda\overline{z})}$  with $|\lambda|=1$.

Conversely, assume that $\Phi(z)=\Phi_{+}(z)+\Phi_0+{\Phi_{+}(\lambda\overline{z})}$  with $|\lambda|=1$.
Then
\begin{eqnarray*}
\widehat{\Phi}(-n)&=& \langle [\Phi_{+}(z)+\Phi_0+{\Phi_{+}(\lambda\overline{z})}], e_{-n}I_{M_2} \rangle\cr
&=&\langle [\sum_{n=1}^{\infty}\widehat{\Phi}(n)z^n+\Phi_0+\sum_{n=1}^{\infty}\widehat{\Phi}(n)\lambda^n\overline{z}^n], e_{-n}I_{M_2}\rangle=\lambda\widehat{\Phi}(n)
\end{eqnarray*}  with $|\lambda|=1$.
  By (i) $\Leftrightarrow$  (ii), we know that $T_{\Phi}$ is complex symmetric with the conjugation ${\cal C}$. \pfend

\medskip

As an application of Theorem \ref{inf-CSO-00}, we recapture the following result.
%3.5
\begin{cor}(\cite{KL1})\label{G_1} {\it
 If  $T_{\varphi_j}  (j=1,2,3,4)$ are  complex symmetric with the conjugation  $C_{1,1}$ where ${\varphi_1}={\varphi_2+\varphi_3+\varphi_4}$, then the following statements are equivalent:\\
{\em (i)} $T_{\Phi}$ is complex symmetric with  the conjugation ${\mathcal C}$ {\em(}with $\lambda=1${\em )}.\\
{\em (ii)} $\widehat{\Phi}(n)=\widehat{\Phi}(-n)$ for all $n\in{\Bbb Z}$.\\
{\em (iii)}  $\Phi(z)=\Phi_0+\sum_{n=1}^{\infty}\widehat{\Phi}(n)(z^n+\overline{z}^{n})$.\\
{\em (iv)} $\Phi(z)=\Phi_{+}(z)+\Phi_0+{\Phi_{+}(\overline{z})}$ for $\Phi_{+}\in zH^2_{{\Bbb C}^2}$.
}
\end{cor}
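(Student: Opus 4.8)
The plan is to deduce this corollary directly from Theorem \ref{inf-CSO-00} by specializing the parameter $\lambda$ to the value $1$. The hypothesis that each $T_{\varphi_j}$ is complex symmetric with the conjugation $C_{1,1}$ is precisely the hypothesis of Theorem \ref{inf-CSO-00} in the case $\lambda=1$, since $C_{1,1}$ is $C_{1,\lambda}$ evaluated at $\lambda=1$ and the relation $\varphi_1=\varphi_2+\varphi_3+\varphi_4$ carries over unchanged. Thus the entire equivalence among (i), (ii), (iii), and (iv) of Theorem \ref{inf-CSO-00} applies verbatim once $\lambda=1$ is inserted.

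First I would observe that the conjugation $\mathcal{C}$ appearing in statement (i) of the corollary is obtained from the conjugation in Theorem \ref{inf-CSO-00} by setting $\lambda=1$, so the two notions of complex symmetry coincide. Next I would substitute $\lambda=1$ into each of the three remaining conditions. In (ii) the factor $\lambda^n$ becomes $1$ for every $n$, reducing $\widehat{\Phi}(n)\lambda^n=\widehat{\Phi}(-n)$ to $\widehat{\Phi}(n)=\widehat{\Phi}(-n)$. In (iii) the term $\lambda^n\overline{z}^n$ becomes $\overline{z}^n$, giving $\Phi(z)=\Phi_0+\sum_{n=1}^{\infty}\widehat{\Phi}(n)(z^n+\overline{z}^n)$. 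Finally, in (iv) the term $\Phi_{+}(\lambda\overline{z})$ becomes $\Phi_{+}(\overline{z})$, so that $\Phi(z)=\Phi_{+}(z)+\Phi_0+\Phi_{+}(\overline{z})$ with $\Phi_{+}\in zH^2_{{\Bbb C}^2}$.

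Since each condition of the corollary is exactly the $\lambda=1$ instance of the corresponding condition of Theorem \ref{inf-CSO-00}, and since that theorem establishes their equivalence for every unimodular $\lambda$, the four statements of the corollary are equivalent. There is essentially no genuine obstacle here; the only point requiring any care is the bookkeeping check that the specialization $\lambda=1$ is simultaneously compatible with the hypothesis and with all four conclusions. This is immediate once one notes that every occurrence of $\lambda$ enters through the factor $\lambda^n$, equivalently through the argument $\lambda\overline{z}$, and each such occurrence collapses to the identity when $\lambda=1$.
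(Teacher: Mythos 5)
Your proposal is correct and matches the paper's own route: the paper states this corollary as an immediate application of Theorem \ref{inf-CSO-00}, i.e.\ precisely the specialization $\lambda=1$ that you carry out. Your verification that the hypothesis and each of the four conditions collapse to the stated forms when $\lambda^n\equiv 1$ is exactly the (implicit) argument the paper intends.
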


\medskip

We say that a block Toeplitz operator $T_{\Phi}$ is {\it analytic} if $\Phi\in H^{\infty}_{M_2}$ and {\it coanlaytic}  if $\overline{\Phi}\in H^{\infty}_{M_2}$, respectively. If $M=\left(
\begin{array}
[c]{cc}%
T_1 & T_2\\
T_3 & T_4%
\end{array}
\right)$ where $T_j$ are bounded linear operators on a Hilbert space, then $det(M):=T_1T_4-T_2T_3$. 
%3.6
\begin{thm}\label{Toep-pro}
 {\it  For $\Phi\in L^{\infty}_{M_2},$ let $T_{\Phi}$ be a  complex symmetric operator on $H^{2}_{{\Bbb C}^2}$ and let $CK_{\mu}$ be inner and $det(CK_{\mu})$ be nonzero where $C$ is the conjugation on $H^{2}_{{\Bbb C}^2}$. 
If  $T_{\Phi}$ is analytic or coanalytic, then
$\Phi$ is  either identically zero on $\Bbb D$ or a nonzero constant function on ${\Bbb D}.$
}
\end{thm}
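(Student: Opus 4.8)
The plan is to handle the analytic case directly and reduce the coanalytic case to it by passing to the adjoint. If $T_\Phi$ is coanalytic then $\Phi^\ast\in H^\infty_{M_2}$, so $T_\Phi^\ast=T_{\Phi^\ast}$ is analytic; moreover, since $T_\Phi=CT_\Phi^\ast C$ one checks $T_{\Phi^\ast}=C(T_{\Phi^\ast})^\ast C$, so $T_{\Phi^\ast}$ is an analytic, complex symmetric block Toeplitz operator with the same conjugation and the same $CK_\mu$-hypothesis. Thus the conclusion for $\Phi^\ast$ gives the conclusion for $\Phi$, and I may assume $\Phi\in H^\infty_{M_2}$.

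First I would record the eigen-relation for reproducing kernels. Let $K_\mu$ denote the $M_2$-valued reproducing kernel of $H^2_{\mathbb{C}^2}$, normalized by $\langle f,K_\mu c\rangle=\langle f(\mu),c\rangle$ for $c\in\mathbb{C}^2$. A one-line computation, $\langle f,T_\Phi^\ast K_\mu c\rangle=\langle \Phi(\mu)f(\mu),c\rangle=\langle f,K_\mu\Phi(\mu)^\ast c\rangle$, shows $T_\Phi^\ast K_\mu c=K_\mu\Phi(\mu)^\ast c$, i.e.\ each column of $K_\mu$ is a matrix-eigenvector of $T_\Phi^\ast$. Next I would transport this through the conjugation: from $T_\Phi=CT_\Phi^\ast C$ one gets $T_\Phi C=CT_\Phi^\ast$, and applying both sides to $K_\mu c$, using antilinearity of $C$ (so $C(K_\mu c)=(CK_\mu)\overline{c}$ with $CK_\mu$ the columnwise conjugate and $\overline{\Phi(\mu)^\ast}=\Phi(\mu)^{T}$), yields $T_\Phi(CK_\mu)=(CK_\mu)\Phi(\mu)^{T}$ as $M_2$-valued functions. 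Since $CK_\mu$ is analytic and $\Phi\in H^\infty_{M_2}$, the operator $T_\Phi$ acts on it as multiplication by $\Phi$, so $\Phi\cdot(CK_\mu)=(CK_\mu)\Phi(\mu)^{T}$ pointwise on $\mathbb{D}$. Using $\det(CK_\mu)\neq0$ to invert, I obtain $\Phi(z)=(CK_\mu)(z)\,\Phi(\mu)^{T}\,(CK_\mu)(z)^{-1}$; in particular $\Phi(z)$ is similar to the fixed matrix $\Phi(\mu)^{T}$ for every $z$.

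The remaining task is to upgrade this pointwise similarity to genuine constancy of $\Phi$, and this is exactly where ``$CK_\mu$ inner'' is used. The function $CK_\mu$ is a columnwise conjugate of the reproducing kernel $K_\mu(z)=(1-\overline{\mu}z)^{-1}I$, whose determinant is a nonvanishing (outer) scalar in $H^\infty$; because the conjugations built from $C_{1,\lambda}$ preserve boundary moduli, $\det(CK_\mu)$ is again outer, while inner-ness of $CK_\mu$ forces $\det(CK_\mu)$ to be inner. An inner-and-outer scalar is constant, so, by the hypothesis that it is nonzero, $\det(CK_\mu)$ is a nonzero constant; a $2\times2$ inner matrix function with constant determinant carries no nontrivial Blaschke--Potapov factor and is therefore a constant unitary $V$. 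Consequently $\Phi(z)=V\Phi(\mu)^{T}V^{-1}$ is a constant matrix, which is the assertion (identically zero when $\Phi(\mu)=0$, and a nonzero constant otherwise).

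I expect the main obstacle to be precisely this final step. The pointwise conjugacy $\Phi(z)=G(z)\Phi(\mu)^{T}G(z)^{-1}$ alone does \emph{not} force constancy: if one were allowed $G=\operatorname{diag}(z,1)$, analyticity of $\Phi$ would be compatible with a nonconstant $\Phi$ having an off-diagonal $z$-entry. Ruling this out requires genuinely invoking both the inner hypothesis and the determinant hypothesis to conclude that $CK_\mu$ is a constant unitary, and the plan's weight rests on justifying that a $CK_\mu$ arising as the conjugate of the (outer, invertible) reproducing kernel cannot be a nonconstant inner function.
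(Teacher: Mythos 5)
Your proof follows the same skeleton as the paper's: evaluate the intertwining $T_\Phi C = CT_\Phi^{\ast}$ on the kernel $K_\mu$, use the eigen-relation $T_\Phi^{\ast}K_\mu c = K_\mu\Phi(\mu)^{\ast}c$, and use analyticity of $\Phi$ so that $T_\Phi$ acts on $CK_\mu$ by multiplication. The two arguments part ways at the bookkeeping of the constant matrix. The paper moves $C$ across it, writing $C\overline{\Phi(\mu)}K_\mu=\Phi(\mu)\,CK_\mu$, hence $[\Phi(\mu)-\Phi]\,CK_\mu=0$, and then cancels $CK_\mu$ using $\det(CK_\mu)\not\equiv 0$ (innerness is never actually used in the paper's proof). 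You use only antilinearity, which is all a general conjugation provides, and correctly obtain the constant on the right as a transpose: $\Phi\cdot(CK_\mu)=(CK_\mu)\Phi(\mu)^{T}$, a pointwise similarity. Your version is the defensible one: the paper's commutation step presupposes $C(Bf)=\overline{B}\,Cf$ for constant matrices $B$, which the theorem does not assume and which fails even for the paper's featured conjugation ${\mathcal C}$ (there one gets $U\overline{B}U$ with $U=\frac{1}{\sqrt{2}}\begin{pmatrix}1&1\\1&-1\end{pmatrix}$ in place of $\overline{B}$). The price you pay is that the similarity alone does not give constancy, so you need the extra step that $CK_\mu$ is a constant unitary.

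That extra step is where your proof has a genuine gap. Its key ingredient is that $\det(CK_\mu)$ is outer, which you justify by assuming $C$ is ``built from $C_{1,\lambda}$.'' But the theorem as stated assumes only that $C$ is a conjugation on $H^{2}_{\mathbb{C}^2}$ with $CK_\mu$ inner and $\det(CK_\mu)$ not identically zero, and outer-ness does not follow from these hypotheses. Concretely, let $C_1$ be the conjugation on $H^2$ defined by $C_1\bigl(a_0+a_1z+\sum_{n\ge 2}a_nz^n\bigr)=\overline{a_1}+\overline{a_0}z+\sum_{n\ge 2}\overline{a_n}z^n$, let $C_2$ be coefficientwise conjugation, and put $C=C_1\oplus C_2$. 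Then for $\mu=0$ one gets $CK_0=\mathrm{diag}(z,1)$: this is inner, its determinant $z$ is not identically zero, yet it is nonconstant and its determinant is inner rather than outer. So the implication ``hypotheses $\Rightarrow$ $CK_\mu$ is a constant unitary'' is false for a general conjugation; this is exactly your own $\mathrm{diag}(z,1)$ warning, realized by an honest conjugation, and it shows the obstacle you flagged cannot be overcome from the stated hypotheses alone. (Whether the theorem itself survives for such $C$ is a separate question, which neither your argument nor the paper's decides.) If instead one restricts to the conjugations the paper actually works with, namely a constant unitary times entrywise $C_{1,\lambda}$, then $CK_\mu=k_{\lambda\overline{\mu}}\,U$ with $U$ constant unitary, your similarity gives $\Phi\equiv U\Phi(\mu)^{T}U^{-1}$ at once, and your proof closes --- indeed more carefully than the paper's own, whose unjustified step merely sits one line earlier.
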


\smallskip

{\it Proof.} Suppose $T_\Phi$ is analytic.
For $\mu \in{\Bbb D}$, let $K_\mu= k_\mu I_{M_2}$, where $k_\mu= \frac{1}{1-\overline \mu z}$ is the reproducing kernel of $H^2$ at $\mu \in \mathbb D$. Observe for any $f\in H^{\infty}$ and $g \in H^2$, 
\begin{equation}
\langle g, T_{\overline f}k_\mu \rangle= \langle T_f g, k_\mu \rangle =\langle fg , k_\mu \rangle = f(\mu)g(\mu)= \langle g, \overline{f(\mu)}k_\mu \rangle,
\end{equation}
which implies that $T_{\overline f}k_\mu=\overline{f(\mu)}k_\mu$.
Now it is easy to verify $T_{\Phi}^{\ast}K_{\mu }= \overline{\Phi(\mu)} K_\mu$.
 Observe
\begin{eqnarray}\label{thm-E1}
CT_{\Phi}^{\ast}K_{\mu }- T_{\Phi}{C}K_{\mu}
& = & {C}\overline{\Phi(\mu)}K_{\mu}-T_{\Phi}{ C}K_{\mu }\cr
&=&{\Phi(\mu)}{C}K_{\mu}-P({\Phi}{C}K_{\mu})=[{\Phi(\mu)}-{\Phi}]{ C}K_{\mu}=0.
\end{eqnarray}
Therefore, we find that ${\Phi(\mu)}-{\Phi}$ is a zero (matrix) function, that is, $\Phi$ actually is a constant matrix. 

On the other hand,  if $T_{\Phi}$ is coanalytic, then  $\overline{\Phi}\in H^{\infty}_{M_2}$.
Hence $\Phi$ must be a constant function or $\Phi=0$ by a similar argument.
 \pfend

\medskip

We observe that if $T_{\Phi}$ is complex symmetric where $\Phi\in H^{\infty}_{M_2}$, then  it must be normal from \cite[Lemma 3.1]{WG}.
We say that  $\Phi$ is a nonconstant inner function on ${\Bbb D}$  which means that $\Phi^{\ast}\Phi=I$ and the entries of $\Phi$ are nonconstant functions.
%3.7
\begin{cor}{\it Let   $CK_{\mu}$ be inner and let $det(CK_{\mu})$ be nonzero where $C$ is the conjugation on $H^{2}_{{\Bbb C}^2}$ and $K_{\mu}=k_{\mu}I_{M_2}$. 
 If $\Phi$ is a nonconstant inner function on ${\Bbb D},$  then
$T_{\Phi}$ is not a complex symmetric operator.}
\end{cor}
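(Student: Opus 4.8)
The plan is to argue by contradiction and reduce the claim directly to Theorem \ref{Toep-pro}. First I would record that a nonconstant inner function $\Phi$ satisfies $\Phi^{\ast}\Phi = I$ with all of its entries lying in $H^{\infty}$, so that $\Phi \in H^{\infty}_{M_2}$; consequently $T_{\Phi}$ is analytic in the sense defined just above.

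Next, suppose toward a contradiction that $T_{\Phi}$ is a complex symmetric operator. The standing hypotheses of this corollary, namely that $CK_{\mu}$ is inner and that $det(CK_{\mu})$ is nonzero, are exactly the assumptions of Theorem \ref{Toep-pro}, and we have just observed that $T_{\Phi}$ is analytic. Hence Theorem \ref{Toep-pro} applies and forces $\Phi$ to be either identically zero on ${\Bbb D}$ or else a nonzero constant matrix function on ${\Bbb D}$.

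Finally I would extract the contradiction from the word \emph{nonconstant}. If $\Phi\equiv 0$, then $\Phi^{\ast}\Phi = 0 \neq I$, which violates the inner condition; and if $\Phi$ is a nonzero constant matrix, then every entry of $\Phi$ is a constant function, contradicting the assumption that the entries of the inner function $\Phi$ are nonconstant. Either alternative is impossible, so $T_{\Phi}$ cannot be a complex symmetric operator.

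Since essentially all of the analytic content has already been carried out in Theorem \ref{Toep-pro}, the only point that needs care is the observation that an inner matrix symbol is automatically analytic, which is what lets the analytic branch of Theorem \ref{Toep-pro} apply; I do not expect any genuine obstacle beyond this routine verification.
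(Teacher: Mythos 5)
Your proposal is correct and takes exactly the route the paper intends: the paper's own proof is the one-line remark that the corollary ``immediately follows from Theorem \ref{Toep-pro}'', and your argument simply spells out that reduction (inner $\Rightarrow$ $\Phi\in H^{\infty}_{M_2}$ $\Rightarrow$ $T_{\Phi}$ analytic, then the theorem's dichotomy of identically zero versus nonzero constant contradicts $\Phi^{\ast}\Phi=I$ and nonconstancy, respectively). No gaps; your version is just a fully written-out form of the same proof.
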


\smallskip

{\it Proof.} The proof immediately follows from Theorem \ref{Toep-pro}.
\pfend

\medskip 
Next, we consider the relations among of the Fourier coefficients of $\varphi_j (j=1,2,3,4)$ for complex symmetric block Toeplitz operators $T_{\Phi}$. 
%3.8
\begin{prop}\label{cor-general}{\it
 If  $T_{\Phi}$ is a complex symmetric operator with the conjugation
 ${\mathcal C}=\frac{1}{\sqrt{2}}\left(
\begin{array}
[c]{cc}%
C_{1,\lambda} & C_{1,\lambda}\\
C_{1,\lambda} & -C_{1,\lambda}%
\end{array}
\right) $, then the following identities hold.
\begin{eqnarray*}
(a)&&\sum_{i=0}^{k-1}[(\hat{\varphi}_1+\hat{\varphi}_3)(k-i)-\overline{\lambda}^{(k+i)}({\hat{\varphi}_1}+{\hat{\varphi}_3})(-(k-i))]a_i\cr
&=&\sum_{n=1}^{\infty}[\overline{\lambda}^{(n+2k)}({\hat{\varphi}_1}+{\hat{\varphi}_3})(n)-(\hat{\varphi}_1+\hat{\varphi}_3)(-n)]a_{n+k},
\end{eqnarray*}
 \begin{eqnarray*}
(b)&&\sum_{i=0}^{k-1}[(\hat{\varphi}_2-\hat{\varphi}_4)(k-i)-\overline{\lambda}^{(k+i)}({\hat{\varphi}_2}-{\hat{\varphi}_4})(-(k-i))]b_i\cr
&=&\sum_{n=1}^{\infty}[\overline{\lambda}^{(n+2k)}({\hat{\varphi}_2}-{\hat{\varphi}_4})(n)-(\hat{\varphi}_2-\hat{\varphi}_4)(-n)]b_{n+k},
\end{eqnarray*}
 \begin{eqnarray*}
(c)&&\sum_{i=0}^{k-1}[(\hat{\varphi}_2+\hat{\varphi}_4)(k-i)-\overline{\lambda}^{(k+i)}({\hat{\varphi}_1}-{\hat{\varphi}_3})(-(k-i))]b_i\cr
&=&\sum_{n=1}^{\infty}[\overline{\lambda}^{(n+2k)}({\hat{\varphi}_1}-{\hat{\varphi}_3})(n)-(\hat{\varphi}_2+\hat{\varphi}_4)(-n)]b_{n+k},
\end{eqnarray*}
 \begin{eqnarray*}
(d)&&\sum_{i=0}^{k-1}[(\hat{\varphi}_1-\hat{\varphi}_3)(k-i)-\overline{\lambda}^{(k+i)}({\hat{\varphi}_2}+{\hat{\varphi}_4})(-(k-i))]a_i\cr
&=&\sum_{n=1}^{\infty}[\overline{\lambda}^{(n+2k)}({\hat{\varphi}_2}+{\hat{\varphi}_4})(n)-(\hat{\varphi}_1-\hat{\varphi}_3)(-n)]a_{n+k}
\end{eqnarray*}  for  all $k$
 where $|\lambda|=1.$}
\end{prop}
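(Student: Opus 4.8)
The plan is to use the bilinear reformulation of complex symmetry. Since $\mathcal C^2=I$ and $\langle \mathcal Cx,\mathcal Cy\rangle=\langle y,x\rangle$, the identity $T_\Phi=\mathcal CT_\Phi^\ast\mathcal C$ is equivalent to the statement that the form $(u,v)\mapsto\langle T_\Phi u,\mathcal Cv\rangle$ is symmetric, i.e. $\langle T_\Phi u,\mathcal Cv\rangle=\langle T_\Phi v,\mathcal Cu\rangle$ for all $u,v\in H^2_{\mathbb C^2}$. I would record this as the first step, reading it off from $\langle T_\Phi u,\mathcal Cv\rangle=\langle \mathcal CT_\Phi^\ast\mathcal Cu,\mathcal Cv\rangle=\langle v,T_\Phi^\ast\mathcal Cu\rangle=\langle T_\Phi v,\mathcal Cu\rangle$, where the middle equality uses the conjugation property with $x=T_\Phi^\ast\mathcal Cu$ and $y=v$.

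The four identities then come from four natural choices of test vectors built from the two coordinate slots of $H^2_{\mathbb C^2}$. Using $\mathcal C\binom{p}{q}=\frac1{\sqrt2}\binom{C_{1,\lambda}(p+q)}{C_{1,\lambda}(p-q)}$, I would substitute $u=\binom{f}{0}$, $v=\binom{z^k}{0}$ to get $\langle T_{\varphi_1+\varphi_3}f,C_{1,\lambda}z^k\rangle=\langle T_{\varphi_1+\varphi_3}z^k,C_{1,\lambda}f\rangle$, which yields $(a)$; the choice $u=\binom{0}{g}$, $v=\binom{0}{z^k}$ produces the analogous condition for $T_{\varphi_2-\varphi_4}$ and yields $(b)$; and the two cross choices $u=\binom{0}{g}$, $v=\binom{z^k}{0}$ and $u=\binom{f}{0}$, $v=\binom{0}{z^k}$ couple $\varphi_2+\varphi_4$ with $\varphi_1-\varphi_3$ and yield $(c)$ and $(d)$ respectively. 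The column combinations $\varphi_1\pm\varphi_3$ and $\varphi_2\pm\varphi_4$ arise exactly from the $\pm$ signs in the second row of $\mathcal C$, and here $f=\sum_i a_iz^i$, $g=\sum_i b_iz^i$ are the test functions whose coefficients are the $a_i,b_i$.

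For each of the four conditions I would expand both inner products into Fourier coefficients, using $(T_\varphi h)^{\wedge}(k)=\sum_{i\ge0}\hat\varphi(k-i)\hat h(i)$ together with $C_{1,\lambda}z^n=\overline\lambda^{\,n}z^n$, and collect the coefficient of each $a_i$ (respectively $b_i$). Splitting the resulting single-index sum into the ranges $i<k$, $i=k$, $i>k$, transferring the tail $i>k$ to the other side, and reindexing $n=i-k$ produces the stated shape, with $\sum_{i=0}^{k-1}$ on the left and $\sum_{n=1}^\infty$ carrying $a_{n+k}$ (resp. $b_{n+k}$) on the right.

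The point needing care is the diagonal term $i=k$. In $(a)$ and $(b)$ it cancels automatically, since the same symbol $\varphi_1+\varphi_3$ (resp. $\varphi_2-\varphi_4$) sits on both sides and its $\hat\psi(0)$ contributions agree. In the cross identities $(c)$ and $(d)$ the $i=k$ term equals $[\widehat{(\varphi_2+\varphi_4)}(0)-\widehat{(\varphi_1-\varphi_3)}(0)]$ times $a_k$ (resp. $b_k$), and I would show it vanishes by taking the $z^0$-component of the relations $(\ref{E_03-1})$ of Theorem \ref{main}, which give $\hat\varphi_1(0)=\hat\varphi_2(0)+\hat\varphi_3(0)+\hat\varphi_4(0)$ and hence precisely $\widehat{(\varphi_2+\varphi_4)}(0)=\widehat{(\varphi_1-\varphi_3)}(0)$. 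The bulk of the remaining work is the bookkeeping of the powers of $\lambda$ generated by $C_{1,\lambda}$ in each inner product; this is the main (though routine) obstacle, and it is where the precise exponents of $\lambda$ in the four displayed identities are pinned down.
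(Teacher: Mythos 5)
Your reduction to the symmetry of the form $(u,v)\mapsto\langle T_\Phi u,\mathcal Cv\rangle$ and your four test-vector choices are correct, and they are in substance the same computation as the paper's proof: the choices $u=\binom{f}{0}$, $v=\binom{z^k}{0}$, etc.\ give exactly the scalar relations $\langle T_{\varphi_1+\varphi_3}f,C_{1,\lambda}z^k\rangle=\langle T_{\varphi_1+\varphi_3}z^k,C_{1,\lambda}f\rangle$ and their three companions, which is the coefficient comparison that the paper packages into its auxiliary sequences $\alpha_k,\beta_k$. Your cancellation of the $i=k$ terms in (c) and (d) via $\hat{\varphi}_1(0)=\hat{\varphi}_2(0)+\hat{\varphi}_3(0)+\hat{\varphi}_4(0)$ is also correct, and it is a point the paper's proof passes over in silence.

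The gap is precisely the step you dismiss as routine bookkeeping ``where the precise exponents of $\lambda$ are pinned down'': carried out correctly, the bookkeeping does \emph{not} produce the printed exponents. Since $C_{1,\lambda}z^n=\overline{\lambda}^{\,n}z^n$, your relation for $\psi=\varphi_1+\varphi_3$ reads
\begin{equation*}
\lambda^{k}\sum_{i\geq 0}\hat{\psi}(k-i)\,a_i=\sum_{i\geq 0}\lambda^{i}\,\hat{\psi}(i-k)\,a_i,
\end{equation*}
and after dividing by $\lambda^{k}$, splitting at $i=k$ and reindexing, the bracket multiplying $a_i$ ($i<k$) is $\hat{\psi}(k-i)-\overline{\lambda}^{\,k-i}\hat{\psi}(-(k-i))$ and the bracket multiplying $a_{n+k}$ is $\lambda^{n}\hat{\psi}(n)-\hat{\psi}(-n)$: the exponents are $k-i$ and $n$ (with $\lambda$, not $\overline{\lambda}$), rather than the printed $k+i$ and $n+2k$, and the two versions coincide only when $\lambda^{2}=1$. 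The discrepancy is not cosmetic: complex symmetry forces $\hat{\psi}(-n)=\lambda^{n}\hat{\psi}(n)$ (and the analogous cross relations for (c),(d)), so every corrected bracket vanishes termwise, whereas the printed brackets do not. Concretely, for $\lambda\neq\pm1$ take $\Phi=(z+\lambda\overline{z})\,I_{2}$; this $T_\Phi$ is $\mathcal C$-symmetric, since $C_{1,\lambda}T_{z+\lambda\overline{z}}^{\ast}C_{1,\lambda}=T_{z+\lambda\overline{z}}$ by Theorem \ref{thm1} and $\mathcal C(T\oplus T)^{\ast}\mathcal C=(C_{1,\lambda}T^{\ast}C_{1,\lambda})\oplus(C_{1,\lambda}T^{\ast}C_{1,\lambda})$, yet identity (a) with $k=0$ reads $0=(\overline{\lambda}-\lambda)a_{1}$, which fails. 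So your method, carried out honestly, proves a corrected version of the proposition rather than the statement as printed; the printed exponents trace back to an inconsistency inside the paper's own proof, which declares $\widetilde{a_j}=\mu\lambda^{j}\overline{a_j}$ but then uses $C_{\mu,\lambda}z^{k}=\mu\overline{\lambda}^{\,k}z^{k}$ in the very computation of (\ref{F_1}) and (\ref{F_2}). To close your write-up you must either restrict to $\lambda=\pm1$, where the exponents coincide and your argument is complete, or record the identities with the corrected exponents.
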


{\it Proof.}  Suppose that $C$ is the conjugation on $H^2$.
 Let $h(z)=\sum_{k=1}^{\infty}\alpha_kz^k$ and $g(z)=\sum_{k=1}^{\infty}\beta_kz^k,$ where
$$\alpha_k=\sum_{n=1}^{\infty}\widetilde{a_{n+k}}\overline{\hat{\varphi}(n)}\ \mbox{and}\ \beta_k=\sum_{i=0}^{k-1}\overline{\hat{\varphi}(-(k-i))}\widetilde{a_i}.$$
Then $Ch(z)=\sum_{k=1}^{\infty}\widetilde{\alpha_k}z^k$ and $Cg(z)=\sum_{k=1}^{\infty}\widetilde{\beta_k}z^k$.
If $C=C_{\mu,\lambda}$, then $\widetilde{a_j}=\mu\lambda^j \overline{a_j}$ with $|\mu|=|\lambda|=1$ and hence for any $\varphi$, we have
$${\alpha_k}=\sum_{n=1}^{\infty}\mu{\lambda}^{n+k}\overline{a_{n+k}}\overline{\hat{\varphi}(n)}\  \mbox{and}\
{\beta_k}=\sum_{i=0}^{k-1}\mu{\lambda}^i\overline{\hat{\varphi}(-(k-i))}\overline{a_i}.$$
Then
\begin{eqnarray}\label{F_1}
 \widetilde{\alpha_k}&=&\langle (C_{\mu,\lambda}\sum_{k=1}^{\infty}[\sum_{n=1}^{\infty}\mu{\lambda}^{n+k}\overline{a_{n+k}}\overline{\hat{\varphi}(n)}]z^k), z^k \rangle\cr
&=&\langle C_{\mu,\lambda}z^k, \sum_{k=1}^{\infty}[\sum_{n=1}^{\infty}\mu{\lambda}^{n+k}\overline{a_{n+k}}\overline{\hat{\varphi}(n)}]z^k \rangle\cr
&=&\langle \mu \overline{\lambda}^{k}z^k, \sum_{k=1}^{\infty}[\sum_{n=1}^{\infty}\mu{\lambda}^{n+k}\overline{a_{n+k}}\overline{\hat{\varphi}(n)}]z^k \rangle
=\sum_{n=1}^{\infty}\overline{\lambda}^{(n+2k)}{a_{n+k}}{\hat{\varphi}(n)}
\end{eqnarray} and, similarly,
\begin{equation}\label{F_2}
\widetilde{\beta_k}=\sum_{i=0}^{k-1}\overline{\lambda}^{(k+i)}{\hat{\varphi}(-(k-i))}{a_i}.
\end{equation}
 Now, by  using (\ref{F_1}) and (\ref{F_2}),  we obtain that
\begin{eqnarray*}
&&\sum_{i=0}^{k-1}[(\hat{\varphi}_1+\hat{\varphi}_3)(k-i)-\overline{\lambda}^{(k+i)}({\hat{\varphi}_1}+{\hat{\varphi}_3})(-(k-i))]a_i\cr
&=&\sum_{n=1}^{\infty}[\overline{\lambda}^{(n+2k)}({\hat{\varphi}_1}+{\hat{\varphi}_3})(n)-(\hat{\varphi}_1+\hat{\varphi}_3)(-n)]a_{n+k},
\end{eqnarray*}
 where $|\lambda|=1.$
 Similarly, we get that
 \begin{eqnarray*}
&&\sum_{i=0}^{k-1}[(\hat{\varphi}_2-\hat{\varphi}_4)(k-i)-\overline{\lambda}^{(k+i)}({\hat{\varphi}_2}-{\hat{\varphi}_4})(-(k-i))]b_i\cr
&=&\sum_{n=1}^{\infty}[\overline{\lambda}^{(n+2k)}({\hat{\varphi}_2}-{\hat{\varphi}_4})(n)-(\hat{\varphi}_2-\hat{\varphi}_4)(-n)]b_{n+k},
\end{eqnarray*}
 \begin{eqnarray*}
&&\sum_{i=0}^{k-1}[(\hat{\varphi}_2+\hat{\varphi}_4)(k-i)-\overline{\lambda}^{(k+i)}({\hat{\varphi}_1}-{\hat{\varphi}_3})(-(k-i))]b_i\cr
&=&\sum_{n=1}^{\infty}[\overline{\lambda}^{(n+2k)}({\hat{\varphi}_1}-{\hat{\varphi}_3})(n)-(\hat{\varphi}_2+\hat{\varphi}_4)(-n)]b_{n+k},
\end{eqnarray*}
 \begin{eqnarray*}
&&\sum_{i=0}^{k-1}[(\hat{\varphi}_1-\hat{\varphi}_3)(k-i)-\overline{\lambda}^{(k+i)}({\hat{\varphi}_2}+{\hat{\varphi}_4})(-(k-i))]a_i\cr
&=&\sum_{n=1}^{\infty}[\overline{\lambda}^{(n+2k)}({\hat{\varphi}_2}+{\hat{\varphi}_4})(n)-(\hat{\varphi}_1-\hat{\varphi}_3)(-n)]a_{n+k},
\end{eqnarray*}  for  all $k$ where $|\lambda|=1.$
Since $T_{\Phi}$ is complex symmetric with the conjugation
 ${\mathcal C}$, we know that  (a),(b),(c), and (d) always hold.
\pfend

%3.9
\begin{cor}\label{cor-g}
 Let ${\Phi}=\begin{pmatrix}{\varphi} & {\psi}\cr
-{\varphi} & {\psi}\end{pmatrix}$.
 If  $T_{\Phi}$ is a complex symmetric operator with the conjugation
 ${\mathcal C}=\frac{1}{\sqrt{2}}\left(
\begin{array}
[c]{cc}%
C_{1,\lambda} & C_{1,\lambda}\\
C_{1,\lambda} & -C_{1,\lambda}%
\end{array}
\right) $, then
$$
(1) \sum_{i=0}^{k-1}2[\hat{\psi}(k-i)-\overline{\lambda}^{(k+i)}{\hat{\varphi}(-(k-i))}]a_i
=\sum_{n=1}^{\infty}2[\overline{\lambda}^{(n+2k)}{\hat{\varphi}(n)}-\hat{\psi}(-n)]a_{n+k},
$$
$$
(2) \sum_{i=0}^{k-1}2[\hat{\varphi}(k-i)-\overline{\lambda}^{(k+i)}{\hat{\psi}(-(k-i))}]b_i
=\sum_{n=1}^{\infty}2[\overline{\lambda}^{(n+2k)}{\hat{\psi}(n)}-\hat{\varphi}(-n)]b_{n+k}
.$$
\end{cor}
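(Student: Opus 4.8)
The plan is to obtain both identities as the direct specialization of Proposition \ref{cor-general} to the symbol $\Phi=\begin{pmatrix}\varphi & \psi\\ -\varphi & \psi\end{pmatrix}$; no genuinely new computation is needed beyond a careful substitution. Accordingly, I would set $\varphi_1=\varphi$, $\varphi_2=\psi$, $\varphi_3=-\varphi$, $\varphi_4=\psi$, and then record the four symbol combinations that control identities (a)--(d):
\[
\hat{\varphi}_1+\hat{\varphi}_3=0,\qquad \hat{\varphi}_2-\hat{\varphi}_4=0,\qquad \hat{\varphi}_1-\hat{\varphi}_3=2\hat{\varphi},\qquad \hat{\varphi}_2+\hat{\varphi}_4=2\hat{\psi}.
\]

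First I would invoke Proposition \ref{cor-general}, which, since $T_{\Phi}$ is assumed complex symmetric with the conjugation $\mathcal{C}$, guarantees that (a), (b), (c), and (d) all hold. Feeding in the combinations above, identities (a) and (b) collapse completely: every coefficient in (a) is assembled from $\hat{\varphi}_1+\hat{\varphi}_3=0$ and every coefficient in (b) from $\hat{\varphi}_2-\hat{\varphi}_4=0$, so each reduces to the vacuous statement $0=0$. Hence only (c) and (d) retain content.

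Next I would substitute into (c) and (d). In (c) the combinations that appear are $\hat{\varphi}_2+\hat{\varphi}_4$ and $\hat{\varphi}_1-\hat{\varphi}_3$, which become $2\hat{\psi}$ and $2\hat{\varphi}$; pulling out the common factor $2$ turns (c) into identity (1). Symmetrically, (d) involves $\hat{\varphi}_1-\hat{\varphi}_3=2\hat{\varphi}$ and $\hat{\varphi}_2+\hat{\varphi}_4=2\hat{\psi}$, and reduces to identity (2). The only bookkeeping point is to match the arbitrary coefficient sequences $a_i$ and $b_i$ --- arising from the two components of a generic vector $\binom{f}{g}\in H^2_{{\Bbb C}^2}$ --- with the sequence named in each asserted identity; because $f$ and $g$ range over all of $H^2$ independently, these sequences are arbitrary and may be relabeled freely, so the correspondence (c)$\to$(1) and (d)$\to$(2) is legitimate.

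I do not expect a real obstacle: the whole content is the observation that the antidiagonal sign $\varphi_3=-\varphi$ together with the repeated second column $\varphi_4=\varphi_2=\psi$ exactly annihilates the two combinations $\hat{\varphi}_1+\hat{\varphi}_3$ and $\hat{\varphi}_2-\hat{\varphi}_4$ that drive (a) and (b), while doubling the two combinations driving (c) and (d). The mildest care required is tracking the factor $2$ and keeping the $a$/$b$ indexing straight.
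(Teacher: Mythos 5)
Your proof is correct and takes exactly the paper's approach: the paper's entire proof is the one-line remark that the corollary follows from Proposition \ref{cor-general}, which is precisely the specialization $\varphi_1=\varphi$, $\varphi_2=\psi$, $\varphi_3=-\varphi$, $\varphi_4=\psi$ that you carry out, with (a) and (b) degenerating to $0=0$ and (c), (d) yielding (1), (2) after factoring out the $2$. Your extra bookkeeping remark --- that (c) is stated with the $b$-sequence while the corollary's (1) carries the $a$-sequence (and symmetrically for (d) and (2)), which is harmless because the coefficient sequences come from an arbitrary vector in $H^2_{{\Bbb C}^2}$ and can be relabeled --- is a detail the paper silently glosses over, and you resolve it correctly.
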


{\it Proof.} The proof follows from Proposition \ref{cor-general}. \pfend
\medskip

%3.10
\begin{cor}
 Let ${\Phi}=\begin{pmatrix}{\varphi} & {\varphi}\cr
-{\varphi} & {\varphi}\end{pmatrix}$.
 If  $T_{\Phi}$ is a complex symmetric operator with the conjugation
 ${\mathcal C}=\frac{1}{\sqrt{2}}\left(
\begin{array}
[c]{cc}%
C_{1,\lambda} & C_{1,\lambda}\\
C_{1,\lambda} & -C_{1,\lambda}%
\end{array}
\right) $, then
$$
\sum_{i=0}^{k-1}2[\hat{\varphi}(k-i)-\overline{\lambda}^{(k+i)}{\hat{\varphi}(-(k-i))}]a_i
=\sum_{n=1}^{\infty}2[\overline{\lambda}^{(n+2k)}{\hat{\varphi}(n)}-\hat{\varphi}(-n)]a_{n+k},
$$
\end{cor}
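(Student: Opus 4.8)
The plan is to recognize this corollary as the degenerate case $\psi=\varphi$ of Corollary \ref{cor-g}, so that no new computation is required beyond a substitution. First I would note that the symbol $\Phi=\begin{pmatrix}\varphi & \varphi\\ -\varphi & \varphi\end{pmatrix}$ is obtained from the symbol $\begin{pmatrix}\varphi & \psi\\ -\varphi & \psi\end{pmatrix}$ of Corollary \ref{cor-g} by taking $\psi=\varphi$; in the notation of Theorem \ref{main} this amounts to $\varphi_1=\varphi_2=\varphi_4=\varphi$ and $\varphi_3=-\varphi$. Since $T_\Phi$ is assumed to be complex symmetric with the conjugation $\mathcal{C}$, the hypotheses of Corollary \ref{cor-g} hold verbatim, and its conclusion is available for use.

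Next I would substitute $\hat{\psi}=\hat{\varphi}$ into identity (1) of Corollary \ref{cor-g}. The left-hand side $\sum_{i=0}^{k-1}2[\hat{\psi}(k-i)-\overline{\lambda}^{(k+i)}\hat{\varphi}(-(k-i))]a_i$ becomes $\sum_{i=0}^{k-1}2[\hat{\varphi}(k-i)-\overline{\lambda}^{(k+i)}\hat{\varphi}(-(k-i))]a_i$, and the right-hand side $\sum_{n=1}^{\infty}2[\overline{\lambda}^{(n+2k)}\hat{\varphi}(n)-\hat{\psi}(-n)]a_{n+k}$ becomes $\sum_{n=1}^{\infty}2[\overline{\lambda}^{(n+2k)}\hat{\varphi}(n)-\hat{\varphi}(-n)]a_{n+k}$. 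These are precisely the two sides of the asserted identity, so the proof closes at once.

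As a cross-check, and an alternative route through Proposition \ref{cor-general}, I would compute the four coefficient combinations appearing there: with $\varphi_1=\varphi_2=\varphi_4=\varphi$ and $\varphi_3=-\varphi$ one has $\hat{\varphi}_1+\hat{\varphi}_3=0$ and $\hat{\varphi}_2-\hat{\varphi}_4=0$, so identities (a) and (b) degenerate to $0=0$, while $\hat{\varphi}_1-\hat{\varphi}_3=2\hat{\varphi}$ and $\hat{\varphi}_2+\hat{\varphi}_4=2\hat{\varphi}$, so identity (d) reproduces the claimed formula (and identity (c) yields the parallel statement with the coefficients $b_{n+k}$). Since this is a pure specialization, there is essentially no obstacle; the only points needing care are the bookkeeping of the index shifts and the observation that all scalar factors collapse to the common factor $2$, both of which are routine.
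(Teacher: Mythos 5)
Your proposal is correct and is essentially identical to the paper's own proof: the paper also obtains this corollary by specializing Corollary \ref{cor-g} to $\psi=\varphi$, under which identity (1) there becomes the stated identity verbatim. Your cross-check via Proposition \ref{cor-general} (identities (a), (b) collapsing to $0=0$ and (d) giving the formula, with (c) the $b$-coefficient analogue) is just the same specialization traced one level further back, so nothing new is needed.
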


{\it Proof.} The proof follows from Corollary \ref{cor-g}. \pfend

\medskip

\section{ Complex symmetry and normality of a block Toeplitz operator}

Let ${\cal{L}}({\cal{H}})$ denote the algebra of all bounded linear operators on a separable complex Hilbert space ${\cal{H}}$.
In this section, we study normal  block Toeplitz operators as some examples of complex symmetric block Toeplit operators with some conjugation ${\mathcal C
}$.
For this, we need the following lemma.
%4.1
\begin{lem}
(\cite[Theorem 2.1]{KKL}) \label{normal}%2.1
{\it For $T=\begin{pmatrix} A & B\cr
 C  & D \end{pmatrix}\in{\cal L}({\cal H}\oplus {\cal H})$, let
 $$
\begin{cases}
 t_1=A^*A+C^*C, t_2=A^*B+C^*D, t_3=B^*B+D^*D,\\
s_1=AA^*+BB^*, s_2=AC^*+BD^*, s_3=CC^*+DD^*.
\end{cases}
$$
 Then $T$ is normal if and only if $t_{j}=s_{j}$ for all $1 \leq j \leq 3$.
}
\end{lem}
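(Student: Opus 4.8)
The plan is to reduce normality of $T$ to an entrywise comparison of the two block products $T^{\ast}T$ and $TT^{\ast}$. First I would record the adjoint $T^{\ast}=\begin{pmatrix} A^{\ast} & C^{\ast}\\ B^{\ast} & D^{\ast}\end{pmatrix}$ and multiply out the two block operator matrices. A direct computation gives
$$T^{\ast}T=\begin{pmatrix} A^{\ast}A+C^{\ast}C & A^{\ast}B+C^{\ast}D\\ B^{\ast}A+D^{\ast}C & B^{\ast}B+D^{\ast}D\end{pmatrix},\qquad TT^{\ast}=\begin{pmatrix} AA^{\ast}+BB^{\ast} & AC^{\ast}+BD^{\ast}\\ CA^{\ast}+DB^{\ast} & CC^{\ast}+DD^{\ast}\end{pmatrix}.$$
In the notation of the statement, the $(1,1)$, $(1,2)$, and $(2,2)$ entries of $T^{\ast}T$ are exactly $t_1,t_2,t_3$, while those of $TT^{\ast}$ are $s_1,s_2,s_3$.

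Next I would observe that the two lower-left entries are the adjoints of the corresponding upper-right entries, namely $B^{\ast}A+D^{\ast}C=(A^{\ast}B+C^{\ast}D)^{\ast}=t_2^{\ast}$ and $CA^{\ast}+DB^{\ast}=(AC^{\ast}+BD^{\ast})^{\ast}=s_2^{\ast}$; this is automatic since $T^{\ast}T$ and $TT^{\ast}$ are self-adjoint. Hence $T$ is normal, that is $T^{\ast}T=TT^{\ast}$, if and only if all four pairs of entries agree: $t_1=s_1$, $t_2=s_2$, $t_2^{\ast}=s_2^{\ast}$, and $t_3=s_3$.

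Finally, the condition $t_2^{\ast}=s_2^{\ast}$ coming from the $(2,1)$-entries is equivalent to $t_2=s_2$ coming from the $(1,2)$-entries simply by taking adjoints, so it is redundant. Therefore the four entrywise identities collapse to the three stated conditions $t_j=s_j$ for $1\le j\le 3$, which establishes the equivalence.

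The argument is purely algebraic, so there is no genuine analytic obstacle. The only point that requires a moment's care is recognizing that the two off-diagonal conditions are adjoint to one another, which is precisely what allows the four matrix-entry equalities to be reduced to the three conditions listed in the statement; everything else is a routine block-matrix multiplication.
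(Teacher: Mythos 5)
Your proof is correct and complete: the block computation of $T^{\ast}T$ and $TT^{\ast}$ is right, and the observation that the $(2,1)$-entry condition is the adjoint of the $(1,2)$-entry condition is exactly why three identities suffice. Note that the paper itself gives no proof of this lemma---it is quoted from \cite[Theorem 2.1]{KKL}---and your argument is the standard one that the cited source relies on, so there is nothing further to reconcile.
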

\medskip

In general, even if $A,B,C,D$ are normal, then $T$ may not be normal. So we investigate the necessary and sufficient conditions so that the $2 \times 2$ operator matrix $T$ to be normal.
%4.2
\begin{lem}
\label{normal_1}{\it
For $R,S\in{\mathcal L}(\mathcal H),$ put $[R,S]=RS-SR.$\\
{\em (i)} For $T=\begin{pmatrix} A & B\cr
 B  & D \end{pmatrix}\in{\cal L}({\cal H}\oplus {\cal H})$, if $A$, $B$, and  $D$ are normal, then  $T$  is normal if and only if $A^*B+B^*D=AB^*+BD^*$ holds.\\
{\em (ii)}  For $T=\begin{pmatrix} A & B\cr
 C  & A \end{pmatrix}\in{\cal L}({\cal H}\oplus {\cal H})$, if $A$ is normal and $[A,B]=[A,C]=0$, then  $T$  is normal if and only if $B^*B=CC^{\ast}$ and $BB^*=C^{\ast}C$ holds.\\
{\em (iii)} For $T=\begin{pmatrix} A & B\cr
 B^{\ast}  & A \end{pmatrix}\in{\cal L}({\cal H}\oplus {\cal H})$,  $T$  is normal if and only if $A$ and $D$ are normal and $AB=BD$ holds.\\
{\em (iv)} For $T=\begin{pmatrix} A & B\cr
 B  & A \end{pmatrix}\in{\cal L}({\cal H}\oplus {\cal H})$,  $T$  is normal if and only if $A$ and $B$ are normal and $[A,B]=0$ holds.
}
\end{lem}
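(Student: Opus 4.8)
The plan is to reduce every case to the normality criterion of Lemma~\ref{normal}: for a $2\times 2$ operator matrix $T=\begin{pmatrix} A & B \\ C & D\end{pmatrix}$, normality is equivalent to the three operator identities $t_1=s_1$, $t_2=s_2$, $t_3=s_3$. For each of the four patterns I would first specialize these six operators to the given block form (setting $C=B$ in (i) and (iv), $D=A$ in (ii) and (iv), and $C=B^{\ast}$ in (iii)), and then read off which of the three identities survive once the stated normality hypotheses on the entries are imposed. The auxiliary tool throughout is the Fuglede--Putnam theorem, which promotes an intertwining relation involving a normal operator to the corresponding relation with its adjoint.

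For part (i), with $C=B$ the first and third identities read $A^{\ast}A+B^{\ast}B=AA^{\ast}+BB^{\ast}$ and $B^{\ast}B+D^{\ast}D=BB^{\ast}+DD^{\ast}$; since $A$, $B$, $D$ are normal these hold automatically, so only $t_2=s_2$, namely $A^{\ast}B+B^{\ast}D=AB^{\ast}+BD^{\ast}$, remains, which is exactly the asserted condition. Part (ii) is analogous: with $D=A$ and $A$ normal, the diagonal identities collapse to $C^{\ast}C=BB^{\ast}$ and $B^{\ast}B=CC^{\ast}$, while the cross identity $A^{\ast}B+C^{\ast}A=AC^{\ast}+BA^{\ast}$ becomes automatic once Fuglede--Putnam is applied to $[A,B]=[A,C]=0$ (yielding $A^{\ast}B=BA^{\ast}$ and $A^{\ast}C=CA^{\ast}$, hence $C^{\ast}A=AC^{\ast}$). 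Thus (ii) reduces precisely to the two adjoint-swapped identities stated.

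Parts (iii) and (iv) follow the same template. In (iii), with $C=B^{\ast}$ the diagonal identities force $A$ and $D$ to be normal, and the cross identity becomes $A^{\ast}B+BD=AB+BD^{\ast}$; the forward implication from $AB=BD$ is immediate once Fuglede--Putnam upgrades $AB=BD$ (with $A,D$ normal) to $A^{\ast}B=BD^{\ast}$. For (iv), with $C=B$ and $D=A$, a clean route is to diagonalize by the unitary $U=\tfrac{1}{\sqrt 2}\begin{pmatrix} I & I \\ I & -I\end{pmatrix}$, under which $U^{\ast}TU=\begin{pmatrix} A+B & 0 \\ 0 & A-B\end{pmatrix}$; normality of $T$ is then equivalent to normality of $A+B$ and of $A-B$, which one connects to ``$A,B$ normal with $[A,B]=0$'' by adding and subtracting the two normality identities.

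The routine part is the forward direction in every case, where Fuglede--Putnam makes each cross term cancel. The main obstacle I expect lies in the converse in (iii) and (iv): there one starts only from the single surviving cross identity (together with normality of the diagonal entries) and must extract the full intertwining relation $AB=BD$, respectively the commutator relation $[A,B]=0$. Pushing the cross identity back to these conclusions is exactly where the normality hypotheses and a careful second application of Fuglede--Putnam have to be combined, and it is the step that demands the most care.
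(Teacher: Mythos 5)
Your handling of (i) and (ii) is correct and is exactly the paper's route: specialize the operators $t_j,s_j$ of Lemma~\ref{normal} to the given block pattern and use the Fuglede--Putnam theorem to cancel the cross terms (the paper writes this out only for (ii) and dismisses the other parts as ``similar''). The genuine problem sits in (iii) and (iv), precisely at the step you flag as ``demanding the most care'' and then leave undone: the converse (``only if'') directions. This is not a step that more careful work can supply, because those directions are false. For (iv), your own diagonalization makes this transparent: $T=\begin{pmatrix} A & B\\ B & A\end{pmatrix}$ is normal iff $A+B$ and $A-B$ are both normal, and adding/subtracting the two normality identities returns only $A^{\ast}A+B^{\ast}B=AA^{\ast}+BB^{\ast}$ and $A^{\ast}B+B^{\ast}A=AB^{\ast}+BA^{\ast}$, which is strictly weaker than ``$A$, $B$ normal and $[A,B]=0$''. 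Concretely, take the normal matrices $N_1=\begin{pmatrix}1&1\\-1&1\end{pmatrix}$ and $N_2=\begin{pmatrix}0&1\\1&0\end{pmatrix}$, whose sum $\begin{pmatrix}1&2\\0&1\end{pmatrix}$ is not normal, and set $A=\tfrac12(N_1+N_2)$, $B=\tfrac12(N_1-N_2)$: then $T$ is unitarily equivalent to $N_1\oplus N_2$, hence normal, while $A$ and $B$ are not normal.

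Part (iii) fails in the same way (besides being garbled as printed: $D$ occurs in the conclusion but not in the matrix). Reading the intended matrix as $\begin{pmatrix} A & B\\ B^{\ast} & D\end{pmatrix}$, Lemma~\ref{normal} shows $T$ is normal iff $A$, $D$ are normal and $(A^{\ast}-A)B=B(D^{\ast}-D)$, and this last identity does not force $AB=BD$: the scalar example $A=1+i$, $B=1$, $D=2+i$ gives a normal $T=\begin{pmatrix}1+i&1\\1&2+i\end{pmatrix}$ (one checks $T^{\ast}T=TT^{\ast}=\begin{pmatrix}3&3\\3&6\end{pmatrix}$), yet $AB=1+i\neq 2+i=BD$. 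So the sufficiency halves of (iii) and (iv), which your Fuglede--Putnam argument does establish, are the most that can be proved; your proposal is incomplete exactly where the statements themselves (and the paper's ``by a similar way'') break down, and you should record this as an error in the lemma rather than attempt to close the gap.
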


{\it Proof.}
(ii) Assume that $A$ is normal, $AB=BA$, and $AC=CA$. By the Fuglede-Putnam theorem, $A^{\ast}B=BA^{\ast}$ and  $A^*C=CA^{\ast}$ (see \cite{Fug}).
Since $$t_{1}=A^{\ast}A+C^{\ast}C, t_{2}=A^{\ast}B+C^{\ast}A, t_{3}= B^{\ast}B+A^{\ast}A,$$ and  $$s_{1}=AA^{\ast}+BB^{\ast}, s_{2}=AC^{\ast}+BA^{\ast}, s_{3}= CC^{\ast}+AA^{\ast},$$
it follows that $t_{2}={s}_{2}$. Hence  $T$  is normal if and only if $B^*B=CC^{\ast}$ and $BB^*=C^{\ast}C$ holds.

The statements (i), (iii), and (iv) hold by a similar way.
\pfend 

\medskip

%4.3
\begin{prop}\label{nor-1} {\it  
 Let  $\varphi_3=\varphi_2$ and $\varphi_j=\alpha_j+\beta_j\rho_j$ where  $\rho_j$ is a real-valued function in $L^{\infty}$ and $\alpha_j, \beta_j\in{\Bbb C}$ for $j=1,2,4$.
Assume that  $\widehat{\Phi}(n)=\widehat{\Phi}(-n)$ for all $n\in{\Bbb Z}$ and ${\varphi_1}={2\varphi_2+\varphi_4}$.
Then the following properties hold.\\
{\em (i)} If   $T_{\varphi_1}$ and $T_{\varphi_2}$ commute,  then $T_{\Phi}$ is normal  complex symmetric with the conjugation ${\mathcal C}${\em (}with $\lambda=1${\em)}.  \\
{\em (ii)}  If $D^{\ast}\not=-D$ where
$D:=T_{\varphi_1}T_{\varphi_2}^{\ast}-T_{\varphi_2}^{\ast}T_{\varphi_1}$,  then $T_{\Phi}$ is nonnormal  complex symmetric with the conjugation  ${\mathcal C}${\em(}with $\lambda=1${\em)}.}
\end{prop}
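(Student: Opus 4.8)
The plan is to separate the complex-symmetry assertion from the normality assertion, since complex symmetry holds verbatim in both (i) and (ii) while only the normality status distinguishes them.

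First I would dispose of complex symmetry, which is common to both parts. Because $\varphi_3=\varphi_2$, the relation $\varphi_1=2\varphi_2+\varphi_4$ is exactly $\varphi_1=\varphi_2+\varphi_3+\varphi_4$, so the standing hypothesis of Theorem \ref{inf-CSO-00} will be in force once each $T_{\varphi_j}$ is known to be complex symmetric with $C_{1,1}$. The assumption $\widehat{\Phi}(n)=\widehat{\Phi}(-n)$ says precisely that $\hat{\varphi}_j(n)=\hat{\varphi}_j(-n)$ for every entry, which by \cite{KL} is equivalent to each $T_{\varphi_j}$ being complex symmetric with $C_{1,1}$. Then condition (ii) of Theorem \ref{inf-CSO-00} holds with $\lambda=1$, and the theorem yields that $T_\Phi$ is complex symmetric with $\mathcal{C}$ (with $\lambda=1$). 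This disposes of the complex-symmetry half of both statements at once.

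Next I would reduce normality to a single commutator identity. Since each $\varphi_j=\alpha_j+\beta_j\rho_j$ with $\rho_j$ real-valued, the Brown--Halmos theorem \cite{BH} makes $A:=T_{\varphi_1}$, $B:=T_{\varphi_2}$, and $T_{\varphi_4}$ all normal, so the hypotheses of Lemma \ref{normal_1}(i) are met for $T_\Phi=\left(\begin{smallmatrix} A & B \\ B & T_{\varphi_4}\end{smallmatrix}\right)$. Applying that lemma, normality of $T_\Phi$ is equivalent to $A^*B+B^*T_{\varphi_4}=AB^*+BT_{\varphi_4}^*$. I would then substitute $T_{\varphi_4}=T_{\varphi_1}-2T_{\varphi_2}=A-2B$ (from $\varphi_1=2\varphi_2+\varphi_4$) and cancel the resulting $2B^*B=2BB^*$ using normality of $B$; the condition collapses to $A^*B+B^*A=AB^*+BA^*$, i.e. $A^*B-BA^*=AB^*-B^*A=D$. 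Since $(AB^*-B^*A)^*=-(A^*B-BA^*)$, the left-hand side equals $-D^*$, so the entire normality condition is simply $D^*=-D$. Thus $T_\Phi$ is normal if and only if $D$ is skew-Hermitian.

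With this equivalence established, both parts follow immediately. For (i), the commutation $T_{\varphi_1}T_{\varphi_2}=T_{\varphi_2}T_{\varphi_1}$ together with normality of $B$ lets the Fuglede--Putnam theorem \cite{Fug} give $AB^*=B^*A$, whence $D=0$ and trivially $D^*=-D$; combined with the first paragraph, $T_\Phi$ is normal complex symmetric. For (ii), the hypothesis $D^*\neq-D$ directly violates the equivalence, so $T_\Phi$ fails to be normal while remaining complex symmetric, yielding a nonnormal complex symmetric operator. The only steps demanding care are verifying that all three blocks are normal so that Lemma \ref{normal_1}(i) applies, and the sign bookkeeping in the reduction---recognizing that the rearranged identity is $-D^*=D$ rather than $D^*=D$---since this sign is exactly what produces the clean skew-Hermitian criterion and the split between (i) and (ii). I expect no genuine obstacle beyond this bookkeeping.
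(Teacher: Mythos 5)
Your proposal is correct and follows essentially the same route as the paper: complex symmetry via Corollary \ref{G_1} (i.e.\ Theorem \ref{inf-CSO-00} with $\lambda=1$), normality of the blocks via Brown--Halmos, the reduction to $T_{\varphi_1}^{\ast}T_{\varphi_2}+T_{\varphi_2}^{\ast}T_{\varphi_4}=T_{\varphi_1}T_{\varphi_2}^{\ast}+T_{\varphi_2}T_{\varphi_4}^{\ast}$ via Lemma \ref{normal_1}(i), and Fuglede--Putnam in part (i). The only (harmless) difference is organizational: you first package the criterion as ``$T_{\Phi}$ normal iff $D^{\ast}=-D$'' and then read off both parts, while the paper verifies (i) and (ii) by the same substitution $\varphi_4=\varphi_1-2\varphi_2$ carried out separately in each case.
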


{\it Proof.}  By Corollary \ref{G_1}, we know that $T_{\Phi}$ is  complex symmetric with the conjugation  ${\cal C}$ (with $\lambda=1$). By hypothesis, $T_{\varphi_j}$ are normal for $j=1,2,4$ from \cite{BH}. 

(i)
Since $T_{\varphi_1}T_{\varphi_2}=T_{\varphi_2}T_{\varphi_1}$ and ${\varphi_4}=\varphi_1-{2\varphi_2}$, it follows from  Fuglede-Putnam theorem that
$$ T_{\varphi_1}^{\ast} T_{\varphi_2}+ T_{\varphi_2}^{\ast}T_{\varphi_4}= T_{\varphi_1} T_{\varphi_2}^{\ast}+ T_{\varphi_2} T_{\varphi_4}^{\ast}.$$
Hence $T_{\Phi}$ is normal from Lemma \ref{normal_1}(i).

(ii) Let $D^{\ast}\not=-D$ where
$D=T_{\varphi_1}T_{\varphi_2}^{\ast}-T_{\varphi_2}^{\ast}T_{\varphi_1}$.
Since ${\varphi_4}=\varphi_1-{2\varphi_2}$ and  $T_{\varphi_2}$ is normal,  it follows that 
\begin{eqnarray*}
&&T_{\varphi_1}^{\ast} T_{\varphi_2}+ T_{\varphi_2}^{\ast}T_{\varphi_4}-T_{\varphi_1} T_{\varphi_2}^{\ast}-T_{\varphi_2} T_{\varphi_4}^{\ast}\cr
&=&T_{\varphi_1}^{\ast} T_{\varphi_2}+ T_{\varphi_2}^{\ast}T_{(\varphi_1-2\varphi_2)}-T_{\varphi_1} T_{\varphi_2}^{\ast}-T_{\varphi_2} T_{(\varphi_1-2\varphi_2)}^{\ast}\cr
&=&T_{\varphi_1}^{\ast} T_{\varphi_2}+ T_{\varphi_2}^{\ast}T_{\varphi_1}-T_{\varphi_1} T_{\varphi_2}^{\ast}-T_{\varphi_2} T_{\varphi_1}^{\ast}\not=0.
\end{eqnarray*}
Hence  $T_{\Phi}$ is not normal  from Lemma \ref{normal_1}(i).
\pfend

\medskip

%4.4
\begin{prop}\label{nor-2} 
{\em Let  ${\varphi_{3}}=\overline{\varphi_{2}}$ and  $\varphi_4=\varphi_1$ and $\varphi_j=\alpha_j+\beta_j\rho_j$ where  $\rho_j$ is a real-valued function in $L^{\infty}$ and $\alpha_j, \beta_j\in{\Bbb C}$ for $j=1,2$.
Assume that  $\widehat{\Phi}(n)=\widehat{\Phi}(-n)$ for all $n\in{\Bbb Z}$ and  $Re (\varphi_2)=0$. Then the following properties hold.\\
{\em (i)} If $T_{\varphi_1}$ and $T_{\varphi_2}$   commute,  then $T_{\Phi}$ is normal  complex symmetric with the conjugation   ${\mathcal C}$ {\em(}with $\lambda=1${\em)}.  \\
{\em (ii) }   If $D^{\ast}\not=-D$ where
$D:=T_{\varphi_1}T_{\varphi_2}^{\ast}-T_{\varphi_2}^{\ast}T_{\varphi_1}$,  then $T_{\Phi}$ is nonnormal  complex symmetric with the conjugation  ${\mathcal C}$ {\em(}with $\lambda=1${\em)}.}
\end{prop}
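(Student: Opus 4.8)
The plan is to dispose of complex symmetry once and for all and then read off normality from the criterion of Lemma~\ref{normal}, treating (i) and (ii) as the two alternatives for a single scalar condition on $D$. First I would verify the hypotheses of Corollary~\ref{G_1}. Since $\varphi_3=\overline{\varphi_2}$ and $\varphi_4=\varphi_1$, the assumption $Re(\varphi_2)=0$ gives $\varphi_2+\overline{\varphi_2}=0$, hence $\varphi_1=\varphi_2+\varphi_3+\varphi_4$; moreover $\widehat{\Phi}(n)=\widehat{\Phi}(-n)$ means $\hat{\varphi_j}(-n)=\hat{\varphi_j}(n)$ for each $j$, which by the $C_{1,1}$ case of Theorem~\ref{thm1} makes every $T_{\varphi_j}$ complex symmetric with $C_{1,1}$. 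Thus Corollary~\ref{G_1} applies and $T_{\Phi}$ is complex symmetric with $\mathcal C$ (with $\lambda=1$) in both clauses, so each reduces to a normality statement.

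Next I would fix the operator-matrix form. Because $\varphi_4=\varphi_1$ and $T_{\varphi_3}=T_{\overline{\varphi_2}}=T_{\varphi_2}^{\ast}$,
\[
T_{\Phi}=\begin{pmatrix}T_{\varphi_1} & T_{\varphi_2}\\ T_{\varphi_2}^{\ast} & T_{\varphi_1}\end{pmatrix}.
\]
By the Brown--Halmos theorem \cite{BH}, $T_{\varphi_1}$ is normal, and since $Re(\varphi_2)=0$ makes $\varphi_2$ purely imaginary we have $T_{\varphi_2}^{\ast}=T_{\overline{\varphi_2}}=-T_{\varphi_2}$, so $T_{\varphi_2}$ is skew-adjoint and in particular normal. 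Writing $A=T_{\varphi_1}$ and $B=T_{\varphi_2}$ and feeding $T_{\Phi}$ into Lemma~\ref{normal}, the equalities $t_1=s_1$ and $t_3=s_3$ collapse (using normality of $A$ and $BB^{\ast}=B^{\ast}B$) to identities that hold automatically, so $T_{\Phi}$ is normal if and only if the single off-diagonal identity $t_2=s_2$, that is $A^{\ast}B+BA=AB+BA^{\ast}$, holds.

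The heart of the argument is to rewrite this off-diagonal defect through $D=T_{\varphi_1}T_{\varphi_2}^{\ast}-T_{\varphi_2}^{\ast}T_{\varphi_1}=AB^{\ast}-B^{\ast}A$. Exactly as in the proof of Proposition~\ref{nor-1}, I would use $\varphi_4=\varphi_1$ together with the normality of $T_{\varphi_2}$ (to absorb the $B^{\ast}B$ and $BB^{\ast}$ contributions) and the Fuglede--Putnam theorem to identify $t_2-s_2$ with a nonzero scalar multiple of $D+D^{\ast}$, paralleling \cite{Fug}. For clause (i), if $T_{\varphi_1}$ and $T_{\varphi_2}$ commute then $AB=BA$, so $D=0$ and hence $D+D^{\ast}=0$; the off-diagonal defect vanishes and $T_{\Phi}$ is normal. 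For clause (ii), the hypothesis $D^{\ast}\neq-D$ is precisely $D+D^{\ast}\neq 0$, so the defect $t_2-s_2$ is nonzero and $T_{\Phi}$ fails to be normal.

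The step I expect to be the main obstacle is exactly this identification of the off-diagonal normality-defect with $D+D^{\ast}$, where all the sign bookkeeping lives. One must use $T_{\varphi_2}^{\ast}=-T_{\varphi_2}$ to pass between the commutators $[A,B]$, $[A^{\ast},B]$ and the combination $D+D^{\ast}$, and one has to confirm that the diagonal contributions $B^{\ast}B$ and $BB^{\ast}$ genuinely cancel by normality of $T_{\varphi_2}$ before invoking Fuglede--Putnam. Getting this bookkeeping right, rather than any conceptual difficulty, is what makes both clauses drop out of the single condition on $D$, just as in Proposition~\ref{nor-1}.
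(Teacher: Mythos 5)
Your setup is sound: the verification of the hypotheses of Corollary~\ref{G_1}, the identification $T_{\Phi}=\begin{pmatrix}A&B\\ B^{\ast}&A\end{pmatrix}$ with $A=T_{\varphi_1}$ normal and $B=T_{\varphi_2}$ skew-adjoint, and your off-diagonal criterion are all correct: Lemma~\ref{normal} does reduce normality of $T_{\Phi}$ exactly to $A^{\ast}B+BA=AB+BA^{\ast}$, i.e.\ $[A^{\ast}-A,B]=0$, and clause (i) is then immediate, since $AB=BA$ and Fuglede--Putnam give $A^{\ast}B=BA^{\ast}$ (no detour through $D$ is needed). The genuine gap is precisely the step you flagged as bookkeeping: $t_2-s_2$ is \emph{not} a nonzero scalar multiple of $D+D^{\ast}$, and no arrangement of signs makes it one. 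Since $B^{\ast}=-B$, one has $D=AB^{\ast}-B^{\ast}A=-[A,B]$ and $D^{\ast}=-[A^{\ast},B]$, so
\[
t_2-s_2=[A^{\ast}-A,\,B],\qquad D+D^{\ast}=-[A^{\ast}+A,\,B];
\]
writing $A=\alpha_1+\beta_1T_{\rho_1}$ (the Brown--Halmos form) and $K=[T_{\rho_1},B]$, these become $-2i\,\mathrm{Im}(\beta_1)K$ and $-2\,\mathrm{Re}(\beta_1)K$ respectively. They are independent multiples of the same operator $K$, so $D^{\ast}\neq-D$ (i.e.\ $\mathrm{Re}(\beta_1)K\neq0$) does not force the defect $-2i\,\mathrm{Im}(\beta_1)K$ to be nonzero, and clause (ii) does not follow by your route.

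Moreover the failure is not repairable, because the statement (ii) itself conflicts with your (correct) defect formula: take $\varphi_1=\varphi_4=z+\overline{z}$ and $\varphi_2=i(z^{2}+\overline{z}^{2})=-\varphi_3$. All hypotheses of Proposition~\ref{nor-2} hold, and $D=-i[T_{\rho_1},T_{\rho_2}]=-i\,(e_1\otimes e_0-e_0\otimes e_1)$ satisfies $D^{\ast}=D\neq-D$, yet $T_{\Phi}=\begin{pmatrix}T_{\rho_1}&iT_{\rho_2}\\ -iT_{\rho_2}&T_{\rho_1}\end{pmatrix}$ is self-adjoint, hence normal. For comparison, the paper's own proof of (ii) commits the very error you correctly avoided in your Lemma~\ref{normal} computation: it invokes Lemma~\ref{normal_1}(i), whose off-diagonal condition presumes the two corner entries are the same operator $B$, on a matrix whose corners are $B$ and $B^{\ast}=-B$, thereby replacing the true defect $[A^{\ast}-A,B]$ with $-(D+D^{\ast})=[A^{\ast}+A,B]$. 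The transplant from Proposition~\ref{nor-1} that both you and the paper rely on is legitimate there, because $\varphi_3=\varphi_2$ makes the corners genuinely equal and the defect really is $-(D+D^{\ast})$; the sign flip $\varphi_3=\overline{\varphi_2}=-\varphi_2$ here is exactly what breaks it. So your proposal reproduces the paper's proof of (i), but its key identification in (ii) fails, and your own correct criterion shows that no proof of (ii) as stated can exist.
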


{\it Proof.} By Corollary \ref{G_1}, we know that $T_{\Phi}$ is  complex symmetric with the conjugation  ${\cal C}$ (with $\lambda=1$). By hypothesis, $T_{\varphi_j}$ are normal for $j=1,2$ from \cite{BH}.

 (i) 
Since $T_{\varphi_1}T_{\varphi_2}=T_{\varphi_2}T_{\varphi_1}$, it follows from  Fuglede-Putnam theorem that
$$ T_{\varphi_1}^{\ast} T_{\varphi_2}+ T_{\varphi_2}^{\ast}T_{\varphi_1}= T_{\varphi_1} T_{\varphi_2}^{\ast}+ T_{\varphi_2} T_{\varphi_1}^{\ast}.$$
Therefore $T_{\Phi}$ is  normal from Lemma \ref{normal_1}(i). 

(ii)    If $D^{\ast}\not=-D$ where
$D=T_{\varphi_1}T_{\varphi_2}^{\ast}-T_{\varphi_2}^{\ast}T_{\varphi_1}$,
then $$T_{\varphi_1}^{\ast} T_{\varphi_2}+ T_{\varphi_2}^{\ast}T_{\varphi_1}-T_{\varphi_1} T_{\varphi_2}^{\ast}-T_{\varphi_2} T_{\varphi_1}^{\ast}\not=0.$$
Hence $T_{\Phi}$ is not normal  from Lemma \ref{normal_1}(i).
%The proof  follows from (i) and Lemma \ref{normal_1}.
\pfend

\medskip
%4.5
\begin{lem}\label{GHR}(\cite{GHR}, 2011, Gu-Hendricks-Rutherford)
{\it Let $\Phi=\Phi_{+}+\Phi_0+\Phi_{-}^{\ast}\in L^{\infty}_{M_2}$.
Assume that $det(\Phi_{+})$ is not identically zero.
Then $T_{\Phi}$ is normal if and only if
 $\Phi^{\ast}\Phi=\Phi\Phi^{\ast}$ and
 $\Phi_{+}=\Phi_{-}U$ for some constant unitary matrix $U$.}
\end{lem}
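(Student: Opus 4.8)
The plan is to reduce normality of $T_\Phi$ to an identity between a block Toeplitz operator and a difference of block Hankel products, and then to extract the two stated conditions from that identity. Throughout write $P_2$ for the projection onto $H^2_{{\Bbb C}^2}$, put $Q=I-P_2$, and define the block Hankel operator $H_\Phi=QM_\Phi P_2$. The first computation I would record is the product rule $T_{\Phi\Psi}=T_\Phi T_\Psi+H_{\Phi^{\ast}}^{\ast}H_\Psi$, obtained by inserting $I=P_2+Q$ into $P_2M_{\Phi\Psi}P_2$. Applying it with $\Psi=\Phi$ and then with $\Phi\leftrightarrow\Phi^{\ast}$ yields the self-commutator formula
\[
T_\Phi^{\ast}T_\Phi-T_\Phi T_\Phi^{\ast}=T_{\Phi^{\ast}\Phi-\Phi\Phi^{\ast}}-H_\Phi^{\ast}H_\Phi+H_{\Phi^{\ast}}^{\ast}H_{\Phi^{\ast}}.
\]
I would also record the elementary facts that $H_\Phi=H_{\Phi_-^{\ast}}$ and $H_{\Phi^{\ast}}=H_{\Phi_+^{\ast}}$ (a Hankel operator sees only the coanalytic part of its symbol), and that left multiplication by a constant matrix commutes with $P_2$ and $Q$.

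For sufficiency, assume $\Phi^{\ast}\Phi=\Phi\Phi^{\ast}$ and $\Phi_+=\Phi_-U$ with $U$ constant unitary. Then $T_{\Phi^{\ast}\Phi-\Phi\Phi^{\ast}}=0$, so it remains only to check $H_\Phi^{\ast}H_\Phi=H_{\Phi^{\ast}}^{\ast}H_{\Phi^{\ast}}$. From $\Phi_+=\Phi_-U$ I get $\Phi_+^{\ast}=U^{\ast}\Phi_-^{\ast}$, hence $H_{\Phi_+^{\ast}}=\widetilde U\,H_{\Phi_-^{\ast}}$, where $\widetilde U$ denotes left multiplication by the constant unitary $U^{\ast}$ and is itself unitary. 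Since $\widetilde U^{\ast}\widetilde U=I$, this gives $H_{\Phi^{\ast}}^{\ast}H_{\Phi^{\ast}}=H_\Phi^{\ast}H_\Phi$, the self-commutator vanishes, and $T_\Phi$ is normal. This direction does not use $\det(\Phi_+)\not\equiv 0$.

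For necessity, suppose $T_\Phi$ is normal, so the right-hand side of the self-commutator formula is $0$. I would first prove pointwise normality $\Phi^{\ast}\Phi=\Phi\Phi^{\ast}$ by a high-frequency test: pairing the identity against $z^{n}v$ and $z^{n+j}w$ (fixed $j$, vectors $v,w\in{\Bbb C}^2$, and $n\to\infty$) makes the Hankel terms tend to $0$, because $\|H_\Phi z^{n}v\|^2=\sum_{k<-n}\|\widehat\Phi(k)v\|^2\to 0$ as a tail of the $L^2$ symbol (and likewise for $H_{\Phi^{\ast}}$), while the Toeplitz term always contributes the fixed value $\langle\widehat\Psi(j)v,w\rangle$ with $\Psi=\Phi^{\ast}\Phi-\Phi\Phi^{\ast}$. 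Hence $\widehat\Psi(j)=0$ for all $j$, i.e. $\Psi\equiv 0$. With pointwise normality in hand the identity collapses to $H_\Phi^{\ast}H_\Phi=H_{\Phi^{\ast}}^{\ast}H_{\Phi^{\ast}}$, equivalently $|H_{\Phi_-^{\ast}}|=|H_{\Phi_+^{\ast}}|$.

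The remaining and hardest step is to convert this equality of moduli into the algebraic relation $\Phi_+=\Phi_-U$. Here I would pass to polar decompositions $H_{\Phi_+^{\ast}}=W|H_{\Phi_+^{\ast}}|=W|H_{\Phi_-^{\ast}}|$, producing a partial isometry $W$ with $H_{\Phi_+^{\ast}}=W H_{\Phi_-^{\ast}}$. Since block Hankel operators intertwine the shift, $W$ must respect this shift structure and therefore be induced by a constant matrix; the hypothesis $\det(\Phi_+)\not\equiv 0$ is precisely what forces $H_{\Phi_+^{\ast}}$ to have large enough range that $W$ is a genuine constant unitary rather than a partial isometry on a proper subspace, and it also lets me invert $\Phi_+$ generically to solve for $U$. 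Reading $H_{\Phi_+^{\ast}}=\widetilde U H_{\Phi_-^{\ast}}$ back at the level of symbols gives $\Phi_+^{\ast}=U^{\ast}\Phi_-^{\ast}$ with $U$ constant unitary, that is, $\Phi_+=\Phi_-U$, completing the proof. I expect this final step — identifying the intertwiner as a constant unitary and invoking $\det(\Phi_+)\not\equiv 0$ to exclude degenerate partial isometries — to be the main obstacle.
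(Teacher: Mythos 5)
First, a remark on the comparison itself: the paper does not prove this lemma at all --- it is quoted from \cite{GHR} --- so the benchmark is the proof in that reference, which goes through the block analogue of Cowen's hyponormality criterion, not through Hankel polar decompositions. A good portion of your proposal is nevertheless correct: the product formula $T_{\Phi\Psi}=T_\Phi T_\Psi+H_{\Phi^*}^*H_\Psi$, the resulting self-commutator identity, the sufficiency direction (where $\Phi_+=\Phi_-U$ gives $H_{\Phi^*}=H_{\Phi_+^*}=M_{U^*}H_{\Phi_-^*}$ with $M_{U^*}$ a constant unitary multiplication, so the two Hankel terms cancel, and indeed no determinant hypothesis is needed), and the first half of necessity: the high-frequency argument pairing the identity against $z^nv$ and $z^{n+j}w$ and letting $n\to\infty$ is a correct, standard way to obtain pointwise normality $\Phi^*\Phi=\Phi\Phi^*$.

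The genuine gap is exactly the step you flag as an ``expectation'': passing from $H_{\Phi_+^*}^*H_{\Phi_+^*}=H_{\Phi_-^*}^*H_{\Phi_-^*}$ to $\Phi_+=\Phi_-U$ with $U$ a \emph{constant} unitary. (There is also a small slip: from $H_{\Phi_+^*}=W|H_{\Phi_+^*}|=W|H_{\Phi_-^*}|$ you cannot conclude $H_{\Phi_+^*}=WH_{\Phi_-^*}$; you need $\Omega=WV^*$ where $H_{\Phi_-^*}=V|H_{\Phi_-^*}|$ --- repairable, but symptomatic.) The core assertion, that a partial isometry intertwining the shift structure of two block Hankel operators ``must be induced by a constant matrix,'' is false as a general principle: $QM_zQ$ on $(H^2_{{\Bbb C}^2})^\perp$ is a backward shift of multiplicity two, its commutant consists of adjoints of analytic matrix Toeplitz operators, and that commutant contains many non-constant partial isometries, e.g.\ $T_\Theta^*$ for any non-constant inner $\Theta\in H^\infty_{M_2}$. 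Your proposed role for $\det\Phi_+\not\equiv 0$ --- forcing the range of $H_{\Phi_+^*}$ to be large enough that $W$ is a genuine unitary --- also does not hold: for $\Phi=\mathrm{diag}(z+\overline z,\,z+\overline z)$ one has $\det\Phi_+=z^2\not\equiv0$, yet both Hankel operators have rank two, so $\Omega$ is a priori only a partial isometry between two-dimensional subspaces on which the compressed shift is nilpotent, and nothing in your argument forces it to come from a constant matrix. Establishing constancy is precisely where the content of the theorem lies. Gu--Hendricks--Rutherford avoid this entirely: normality means both $T_\Phi$ and $T_{\Phi^*}$ are hyponormal, the block Cowen criterion produces contractive $K_1,K_2\in H^\infty_{M_2}$ with $\Phi-K_1\Phi^*$ and $\Phi^*-K_2\Phi$ analytic, eliminating $\Phi^*$ and invoking $\det\Phi_+\not\equiv0$ forces $K_1K_2=I$, and a contractive analytic matrix function whose inverse is also contractive and analytic must be a constant unitary --- that constant is the $U$ of the statement. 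To salvage your route you would need a real structure theorem for isometric intertwiners of compressed shifts on the range closures of block Hankel operators, which is essentially equivalent to what is being proved.
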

%\medskip
%4.6
\begin{prop}
 {\it
 Assume that $T_{\varphi_j}  (j=1,2,3,4)$ are  complex symmetric with the conjugation  $C_{1,1}$ where ${\varphi_1}={\varphi_2+\varphi_3+\varphi_4}$. 
If  $$|\varphi_2|=|\varphi_3|\ \mbox{and}\
(\varphi_1-\varphi_4)\overline{\varphi_3}=(\overline{\varphi_1}-\overline{\varphi_4})\varphi_2$$  and ${\Phi_{-}}^{\ast}(z)=\Phi_{+}(\overline{z})=U\Phi_{+}^{\ast}(z)$ for some constant unitary matrix $U$,
then $T_{\Phi}$ is normal complex symmetric with  the conjugation ${\mathcal C}$ {\em (}with $\lambda=1${\em)}.
}
\end{prop}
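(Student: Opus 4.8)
The plan is to split the claim into its two conclusions --- complex symmetry and normality --- and to handle them with the two ready-made tools from the excerpt: Corollary \ref{G_1} for the former and the Gu--Hendricks--Rutherford criterion (Lemma \ref{GHR}) for the latter. Both conclusions will be extracted from the single hypothesis chain $\Phi_{-}^{\ast}(z) = \Phi_{+}(\overline{z}) = U\Phi_{+}^{\ast}(z)$, together with the two scalar identities imposed on the $\varphi_j$.

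For complex symmetry I would start from the standing decomposition $\Phi(z) = \Phi_{+}(z) + \Phi_0 + \Phi_{-}^{\ast}(z)$ recorded just before Theorem \ref{inf-CSO-00}. Substituting the first equality $\Phi_{-}^{\ast}(z) = \Phi_{+}(\overline{z})$ rewrites this as $\Phi(z) = \Phi_{+}(z) + \Phi_0 + \Phi_{+}(\overline{z})$ with $\Phi_{+} \in zH^2_{{\Bbb C}^2}$, which is precisely condition (iv) of Corollary \ref{G_1}. Since by hypothesis each $T_{\varphi_j}$ is complex symmetric with $C_{1,1}$ and $\varphi_1 = \varphi_2+\varphi_3+\varphi_4$, Corollary \ref{G_1} applies verbatim and yields that $T_{\Phi}$ is complex symmetric with ${\mathcal C}$ (with $\lambda = 1$). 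This step is essentially immediate.

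For normality I would verify the two hypotheses of Lemma \ref{GHR}. The pointwise normality $\Phi^{\ast}\Phi = \Phi\Phi^{\ast}$ is a direct $2\times 2$ computation: the $(1,1)$ and $(2,2)$ entries of $\Phi\Phi^{\ast} - \Phi^{\ast}\Phi$ are $\pm(|\varphi_2|^2 - |\varphi_3|^2)$, which vanish by the hypothesis $|\varphi_2| = |\varphi_3|$, while the $(1,2)$ entry equals $\overline{\varphi_3}(\varphi_1 - \varphi_4) - \varphi_2(\overline{\varphi_1} - \overline{\varphi_4})$, which vanishes by the second hypothesis $(\varphi_1-\varphi_4)\overline{\varphi_3} = (\overline{\varphi_1}-\overline{\varphi_4})\varphi_2$ (the $(2,1)$ entry being its conjugate). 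For the intertwining condition I would read off Fourier coefficients from $\Phi_{-}^{\ast}(z) = U\Phi_{+}^{\ast}(z)$: comparing the coefficients of $\overline{z}^{\,n}$ gives $\widehat{\Phi}(-n) = U\,\widehat{\Phi}(n)^{\ast}$, and taking adjoints gives $\widehat{\Phi}(-n)^{\ast} = \widehat{\Phi}(n)\,U^{\ast}$; summing against $z^n$ then yields $\Phi_{-}(z) = \Phi_{+}(z)U^{\ast}$, that is $\Phi_{+} = \Phi_{-} U$. With both conditions in hand, Lemma \ref{GHR} delivers normality of $T_{\Phi}$.

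The one place that needs care --- and the main obstacle --- is the blanket requirement in Lemma \ref{GHR} that $det(\Phi_{+})$ be not identically zero, which is not listed among the stated hypotheses of the proposition. I would either carry it over as a standing assumption inherited from Lemma \ref{GHR} or dispose of the degenerate case $det(\Phi_{+})\equiv 0$ separately (for instance, reducing to a scalar or limiting argument when $\Phi_{+}$ is rank-deficient). Apart from this caveat, every step is a short verification feeding directly into the two cited results.
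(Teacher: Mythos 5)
Your proof is correct and follows essentially the same route as the paper's own (very terse) proof: Corollary \ref{G_1}(iv), applied via the identity $\Phi_{-}^{\ast}(z)=\Phi_{+}(\overline{z})$, gives complex symmetry, while the entrywise $2\times 2$ computation showing $\Phi^{\ast}\Phi=\Phi\Phi^{\ast}$ together with the unitary intertwining $\Phi_{+}=\Phi_{-}U$ feeds into Lemma \ref{GHR} to give normality. Your caveat about $\det(\Phi_{+})\not\equiv 0$ is legitimate, but it is a defect of the proposition as stated --- the paper's proof invokes Lemma \ref{GHR} without ever addressing that hypothesis --- rather than a gap in your argument.
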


{\it Proof.} Since note that   $\Phi^{\ast}\Phi=\Phi\Phi^{\ast}$ if  and only if $$|\varphi_2|=|\varphi_3|\ \mbox{and}\
(\varphi_1-\varphi_4)\overline{\varphi_3}=(\overline{\varphi_1}-\overline{\varphi_4})\varphi_2,$$ 
the proof follows from Corollary \ref{G_1} and Lemma \ref{GHR}.\pfend

\bigskip

\bigskip

%\noindent Caixing Gu

%\noindent Department of Mathematics, California Polytechnic State University, San Luis
%Obispo, CA 93407

%\noindent e-mail: cgu@calpoly.edu

%\bigskip

\noindent Dong-O Kang

\noindent Department of Mathematics, Chungnam National University, Daejeon, 305-764, Korea

\noindent E-mail: dokang@cnu.ac.kr

\vspace{5mm}
\noindent Eungil Ko

\noindent Department of Mathematics, Ewha Womans University, Seoul 120-750, Korea

\noindent e-mail: eiko@ewha.ac.kr

\vspace{5mm}

\noindent Ji Eun Lee

\noindent Department of Mathematics and Statistics, Sejong University, Seoul 143-747, Korea

\noindent e-mail: jieunlee7@sejong.ac.kr; jieun7@ewhain.net

\begin{thebibliography}{80}
\bibitem{Al} A. Messiah, {\it Quantum Mechanics II}, Dover Publication, INC, Mineola, New York, 1965.
\bibitem{BE} E. L. Basor and T. Ehrhardt, {\it Torsten Asymptotics of block Toeplitz determinants and the classical Dimer model}, Comm. Math. Phys. {\bf 274 }(2007), 427-455.
\bibitem{BH} A. Brown and P. R. Halmos, {\it Algebraic properties of Toeplitz operators}, J. Reine
angew. Math., {\bf 213}(1963-1964), 89-102.
\bibitem{BFGJ} C. Bender, A. Fring, U.  G\"{u}nther, and H. Jones, {\it Quantum physics with non-Hermitian operators},
J. Phys. A: Math. Theor. {\bf 45}(2012), 440301.
\bibitem{Fug}B. Fuglede, {\it A commutativity theorem for normal operators}, Proc. Natl. Acad. Sci. USA, {\bf 36} (1950), 35-40.
\bibitem{Ga 3} S. R. Garcia and M. Putinar, {\it Complex symmetric operators and applications}, Trans. Amer. Math. Soc. {\bf 358}(2006), 1285-1315.
\bibitem{GHR} Gu, Caixing,  J. Hendricks, and  D. Rutherford, {\it Hyponormality of block Toeplitz operators}, Pacific J. Math.  {\bf 233}(2006),  95-111.
\bibitem{GRP} S. R. Garcia, E. Prodan, and M. Putinar, {\em Mathematical and physical aspects of complex symmetric operators},  J. Phys. A: Math. Theor. {\bf 47}(2014), 1-51.
\bibitem{GW} S. R. Garcia and W. R. Wogen, {\it Some new classes of complex symmetric operators}, Trans. Amer. Math. Soc. {\bf 362}(2010), 6065-6077.
\bibitem{GZ} K. Guo and S. Zhu, {\em A canonical decomposition of complex symmetric operators}, J. Oper. Theory, {\bf 72}(2014), 529-547.
 \bibitem{HKL} I. S. Hwang, D. O. Kang, and W. Y. Lee, {\it A gap between hyponormality and subnormality for block Toeplitz operators}, J. Math. Anal. Appl. {\bf 383}(2011), 883-891.
\bibitem{Gu} C. Gu, J. Hendricks, and D. Rutherford, {\it Hyponormality of Block Toeplitz operators}, Pacific J. Math., {\bf 223}(2006), 95-111.
\bibitem{JKL} S. Jung, E. Ko, and J. E. Lee, {\it On complex symmetric operator matrices}, J. Math. Anal. Appl. {\bf 406}(2013), 373-385.
\bibitem{KL} E. Ko and J. E.  Lee, {\em  On complex symmetric Toeplitz operators}, J. Math. Anal. Appl. {\bf 434}(2016), 20-34.
\bibitem{KL1}  \underline{\hspace{2cm}}, {\em  Remark on complex symmetric operator matrices}, Linear and Multilinear Algebra, to appear.
\bibitem{KKL} E. Ko,  H. K. Kwon, and J. E. Lee, {\em  A characterization of binormal matrices}, Linear and Multilinear Algebra, {\bf 66}(6)(2018), 1215-1228.
\bibitem{P} E de Prunel\'{e}, {\it Conditions for bound states in a periodic linear chain, and the spectra of a class of Toeplitz operators in terms of polylogarithm functions}, J. Phys. A: Math. Theor. {\bf 36}(2003), 8797-8815.
\bibitem{So} L. Sobrino, {Elements of non-relativistic quantum Mechanics}, World Scientific, 1996.
\bibitem{We} S. Weinberg, {The quantum theory of fields}, Cambridge University Press, 1995.
\bibitem{WG} X. Wang and Z. Gao, {\it A note on Aluthge transforms of complex symmetric operators and applications}, Int. Eq. Op. Th. {\bf 65}(2009), 573-580.
\end{thebibliography}
\end{document}